\documentclass[11pt, reqno, psamsfonts]{amsart}
\pdfoutput=1

\usepackage{amssymb}
\usepackage{amsthm}
\usepackage{amsmath}
\usepackage{latexsym}
\usepackage[T1]{fontenc}
\usepackage[utf8]{inputenc}
\usepackage[russian, french, english]{babel}

\usepackage{graphicx}
\usepackage{wrapfig}
\usepackage{mathtools}
\usepackage{tikz}
\usepackage{amsbsy}
\usepackage[inline]{enumitem}
\usepackage{mathrsfs}
\usetikzlibrary{shapes,snakes}
\usetikzlibrary{arrows.meta}
\usetikzlibrary{decorations.pathmorphing}
\usetikzlibrary{patterns}
\usetikzlibrary{calc}
\usepackage{float}
\usepackage{array}
\usepackage{subcaption}
\usepackage{multicol}
\usepackage{stmaryrd}
\usepackage{cancel} 
\usepackage{algorithm}
\usepackage{algpseudocode}
\makeatletter
\algnewcommand{\LineComment}[1]{\Statex \hskip\ALG@thistlm {\color{gray}\texttt{// #1}}}
\makeatother

\counterwithin{algorithm}{section}
\usepackage[justification=centering, labelfont=bf]{caption}

\usepackage{lmodern}
\usepackage{mathabx}

\usepackage{upgreek}
\usepackage{titlesec}
\usepackage{bbm}
\usepackage[spacing=true,kerning=true,babel=true,tracking=true]{microtype}
\usepackage[shortcuts]{extdash}
\usepackage[foot]{amsaddr}
\usepackage[left=1in,right=1in,top=1in,bottom=1in,bindingoffset=0cm]{geometry}
\usepackage{bm}
\usepackage{centernot}
\usepackage{mdframed}
\usepackage{hyperref}
\usepackage{mathdots}
\usepackage{xspace}
\hypersetup{
    colorlinks=true,
    linkcolor=blue,
    filecolor=magenta,      
    urlcolor=red,
    citecolor=gray,
}
\allowdisplaybreaks

\usepackage[backend=biber, style=alphabetic, sorting=nyt, maxnames=100,backref=true, firstinits=true]{biblatex}
\addbibresource{references.bib}

\title{List colorings of $K$-partite $K$-graphs}
\date{}
\author{\lsstyle Abhishek~Dhawan}
\email{adhawan2@illinois.edu}
\address{\textls{\normalfont{}Department of Mathematics, University of Illinois Urbana-Champaign, Urbana, IL, USA}}
\thanks{This research was partially supported by NSF RTG grant DMS-1937241.}

\newtheoremstyle{bfnote}%
{}{}%
{\slshape}{}%
{\bfseries}{\bfseries.}%
{ }%
{\thmname{#1}\thmnumber{ #2}\thmnote{ \ep{\normalfont{}#3}}}

\theoremstyle{bfnote}
\newtheorem{theo}[equation]{Theorem}
\newtheorem*{theo*}{theo}

\newtheorem{Lemma}[equation]{Lemma}
\newtheorem{claim}{Claim}[equation]
\newtheorem{corl}[equation]{Corollary}
\newtheorem{conj}[equation]{Conjecture}

\newtheorem*{corl*}{Corollary}

\newcounter{ForClaims}[section]

\theoremstyle{definition}
\newtheorem{defn}[equation]{Definition}
\newtheorem*{defn*}{Definition}

\newtheorem*{exmp*}{Example}

\theoremstyle{remark}
\newtheorem*{ques*}{Question}
\newtheorem*{remk*}{Remark}

\newcommand*{\myproofname}{Proof}
\newenvironment{claimproof}[1][\myproofname]{\begin{proof}[#1]}{\end{proof}}

\newcommand{\0}{\emptyset}
\newcommand{\set}[1]{\{#1\}}
\newcommand{\N}{{\mathbb{N}}}

\renewcommand{\P}{\mathbb{P}}

\renewcommand{\epsilon}{\varepsilon}
\newcommand{\eps}{\epsilon}
\renewcommand{\phi}{\varphi}
\renewcommand{\theta}{\vartheta}
\renewcommand{\leq}{\leqslant}
\renewcommand{\geq}{\geqslant}
\newcommand{\defeq}{\coloneqq}
\newcommand{\im}{\mathrm{im}}
\newcommand{\bemph}[1]{{\normalfont#1}} 
\newcommand{\ep}[1]{\bemph{(}#1\bemph{)}} 

\newcommand{\emphdef}[1]{\textbf{\textit{{#1}}}}
\newcommand{\pto}{\dashrightarrow}
\newcommand{\emphd}[1]{\emphdef{#1}}

\numberwithin{equation}{section}

\newcommand{\bbone}{\mathbbm{1}}

\titleformat{\subsection}[block]{\bfseries}{\thesubsection.}{1ex}{}
\titleformat{\subsubsection}[runin]{\itshape}{\bfseries\upshape\thesubsubsection.}{1ex}{}[.---]


\titleformat{\section}[block]{\scshape\filcenter}{\thesection.}{1ex}{}

\titlespacing*{\section}{0pt}{*3}{*1}
\titlespacing*{\subsection}{0pt}{*3}{*1}
\titlespacing*{\subsubsection}{0pt}{*1.5}{*0}

\renewbibmacro{in:}{}

\renewbibmacro*{volume+number+eid}{%
	\printfield{volume}%
	\setunit*{\addnbspace}
	\printfield{number}%
	\setunit{\addcomma\space}%
	\printfield{eid}}

\DeclareFieldFormat[article]{volume}{\textbf{#1}\space}
\DeclareFieldFormat[article]{number}{\mkbibparens{#1}}

\DeclareFieldFormat{journaltitle}{#1,}
\DeclareFieldFormat[thesis]{title}{\mkbibemph{#1}\addperiod}
\DeclareFieldFormat[article, unpublished, thesis]{title}{\mkbibemph{#1},}
\DeclareFieldFormat[book]{title}{\mkbibemph{#1}\addperiod}
\DeclareFieldFormat[unpublished]{howpublished}{#1, }

\DeclareFieldFormat{pages}{#1}

\DeclareFieldFormat[article]{series}{Ser.~#1\addcomma}

\setlength{\footskip}{1.5\baselineskip}

\setlist{topsep=3pt,itemsep=3pt}

\pagestyle{plain}

\begin{document}

\vspace*{0pt}

\maketitle

\begin{abstract}
    A $k$-uniform hypergraph (or $k$-graph) $H = (V, E)$ is $k$-partite if $V$ can be partitioned into $k$ sets $V_1, \ldots, V_k$ such that each edge in $E$ contains precisely one vertex from each $V_i$. In this note, we consider list colorings for such hypergraphs. We show that for any $\varepsilon > 0$ if each vertex $v \in V(H)$ is assigned a list of size $|L(v)| \geq \left((k-1+\varepsilon)\Delta/\log \Delta\right)^{1/(k-1)}$, then $H$ admits a proper $L$-coloring, provided $\Delta$ is sufficiently large. Up to a constant factor, this matches the bound on the chromatic number of simple $k$-graphs shown by Frieze and Mubayi, and that on the list chromatic number of triangle free $k$-graphs shown by Li and Postle. Our results hold in the more general setting of ``color-degree'' as has been considered for graphs. Furthermore, we establish a number of asymmetric statements matching results of Alon, Cambie, and Kang for bipartite graphs.
\end{abstract}

\section{Introduction}\label{sec:intro}

All hypergraphs considered are finite and undirected.
A $k$-uniform hypergraph (or $k$-graph) is an ordered pair $H = (V, E)$ where $E$ is a collection of $k$-element subsets of $V$.
We say $H$ is \textbf{\textit{$k$-partite}} if there exists a partition $V = V_1\cup \cdots \cup V_k$ such that each edge $e \in E$ contains precisely one vertex from each set $V_i$.
A \textbf{\textit{proper $q$-coloring}} of $H$ is an assignment of the integers $1,\, \ldots, \,q$ to the vertices of $H$ such that at least two distinct integers appear on the vertices of each edge.
The minimum number of colors required for a proper coloring is the \textit{\textbf{chromatic number}} of $H$ (denoted $\chi(H)$).
A \textbf{\textit{list assignment for $H$}} is a function $L\,:\,V(H) \to 2^\N$.
Given a list assignment $L$ for $H$, a \textbf{\textit{proper $L$-coloring}} of $H$ is a proper coloring where each vertex receives a color from its list.
The \textit{\textbf{list chromatic number}} (denoted $\chi_\ell(H)$) is the minimum $q$ such that $H$ admits a list coloring whenever $|L(v)| \geq q$ for each $v$.
For $q \in \N$, we let $[q] \defeq \set{1, \ldots, q}$.
For a set $S$ and an element $x \in S$, we let $S - x$ denote the set $S \setminus \set{x}$.
Let $H = (V, E)$ be an undirected $k$-graph.
For each $v \in V$, we let $E_H(v)$ denote the edges containing $v$, $N_H(v)$ denote the set of vertices contained in the edges in $E_H(v)$ apart from $v$ itself, $\deg_H(v) \defeq |E_H(v)|$, and $\Delta(H) \defeq \max_{u\in V}\deg_H(u)$.
Furthermore, for $S \subseteq V$, we let $E_H(S)$ denote the edges $e$ for which $S \subseteq e$, and let $\deg_H(S) \defeq |E_H(S)|$.
Finally, a \textbf{\textit{hypermatching}} is a hypergraph $H$ satisfying $\Delta(H) \leq 1$, and an \textbf{\textit{independent set}} is a subset of vertices containing no edges.

The problem of determining $\chi(G)$ for graphs $(k=2)$ has a long and rich history. 
Brooks provided the first bound in terms of the maximum degree \cite{brooks1941colouring}.
He showed that $\chi(G) \leq \Delta(G)$ unless $G$ is complete or an odd cycle (in this case $\chi(G) = \Delta(G) + 1$).
Reed improved upon this showing that $\chi(G) \leq \Delta(G) - 1$ for sufficiently large $\Delta(G)$, provided $G$ does not contain a clique of size $\Delta(G)$ \cite{reed1999strengthening}.
For $k > 2$, a simple \hyperref[theo:LLL]{\LLL} argument shows that $\chi(H) = O\left(\Delta(H)^{1/(k-1)}\right)$ for all $k$-graphs $H$.
A natural question to consider is the following: under what structural constraints can we get better bounds for $\chi(H)$?
Forbidding a specific subgraph $F$ has led to improved bounds in the case where $k = 2$.
For $F = K_3$, one can show $\chi(G) = O\left(\Delta(G)/\log\Delta(G)\right)$ \cite{Joh_triangle, PS15}.
The constant factor has been reduced to $1+\eps$ \cite{Molloy,bernshteyn2019johansson}, which is optimal up to a factor of $2$ \cite{BollobasIndependence}.
There are a number of results for other graphs $F$, which may be of interest to the reader \cite{AKSConjecture, DKPS, anderson2022coloring, anderson2023colouring}.

Analogous problems have been investigated for $k > 2$.
Frieze and Mubayi studied the chromatic number of \textit{\textbf{simple hypergraphs}}.
A hypergraph $H$ is simple if $\deg_H(S) \leq 1$ for any $S \subseteq V(H)$ satisfying $|S| \geq 2$.
They proved the following bound:

\begin{theo}[\cite{frieze2013coloring}]\label{theo:FM}
    For $k \geq 3$, the following holds for sufficiently large $\Delta \in \N$.
    Let $H$ be a simple $k$-graph of maximum degree at most $\Delta$.
    Then for some constant $c \defeq c(k) > 0$, we have
    \[\chi(H) \leq c\left(\frac{\Delta}{\log \Delta}\right)^{1/(k-1)}.\]
\end{theo}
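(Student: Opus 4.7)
The plan is to apply the semi-random (``nibble'') method, iterating for $T = \Theta(\log \Delta)$ rounds. Fix $q \defeq C(\Delta/\log\Delta)^{1/(k-1)}$ for a sufficiently large constant $C = C(k) > 0$, and initialize $L_0(v) \defeq [q]$ for every $v \in V(H)$. We shall construct a sequence of partial colorings $\sigma_0, \ldots, \sigma_T$ so that the uncolored vertices at each stage carry lists and residual color-degrees in a ratio that, after $T$ rounds, permits a standard \LLL~finish.

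In round $t$, each uncolored vertex $v$ is activated independently with probability $p_t$ and tentatively receives a uniformly random color from $L_{t-1}(v)$; we then uncolor any vertex whose assignment has created a monochromatic edge, and remove from $L_{t-1}(v)$ any color $c$ such that some edge $e \ni v$ already has every other endpoint colored $c$. Two quantities are tracked: the list size $\ell_t(v) \defeq |L_t(v)|$ and, for each color $c$, the residual $c$-degree $d_t^c(v)$ counting edges $e \ni v$ with $c \in L_t(u)$ for all $u \in e - v$. For appropriately chosen $p_t$, a direct expectation computation shows that $\ell_t(v)$ and $d_t^c(v)$ shrink by a common factor per round while the ratio $d_t^c(v) / \ell_t(v)^{k-1}$ remains bounded, so after $T$ rounds the residual degree is small enough that the symmetric \LLL~yields a proper $L_T$-coloring of the remaining vertices.

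The core technical step is proving that, with positive probability, these expected behaviours are realised simultaneously at every vertex and every color. This is accomplished by a \LLL~argument: for each $v$ and $c$, define bad events corresponding to large deviations of $\ell_t(v)$ or $d_t^c(v)$ from their means, and bound each probability via Talagrand's inequality. The simplicity hypothesis plays its crucial role here, since any two edges of $H$ meet in at most one vertex, which keeps the contributions from distinct edges to $\ell_t(v)$ and $d_t^c(v)$ only weakly correlated and ensures that each bad event shares random variables with only few others, so that the \LLL's dependency graph remains sparse enough.

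The main obstacle is establishing concentration of $d_t^c(v)$. Each edge involves $k$ independent random color choices, so a single color assignment can disturb many edges incident to $v$, and a na\"ive Lipschitz bound for Talagrand is too weak. The remedy, exploiting simplicity, is to certify the value of $d_t^c(v)$ by a short list of edges and witness vertices rather than by all $\Delta$ incident edges, yielding an effective Lipschitz constant polynomial in $q$ rather than in $\Delta$. Talagrand's inequality then provides the exponential concentration needed to fuel the \LLL~across all $T$ rounds, after which the \LLL~finish described above completes the proof.
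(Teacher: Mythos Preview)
This theorem is not proved in the present paper at all: it is quoted, with citation, as a result of Frieze and Mubayi and serves purely as background motivation for the paper's own theorems on $k$-partite $k$-graphs. There is therefore no proof here to compare your proposal against.

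For what it is worth, your outline is a faithful high-level summary of the semi-random (nibble) strategy that Frieze and Mubayi actually use in their original paper: iterate partial random colorings, track list sizes $\ell_t(v)$ and residual color-degrees $d_t^c(v)$, use Talagrand plus the \LLL{} to keep both quantities concentrated round by round (with simplicity supplying the short certificates and the sparse dependency graph), and finish with a one-shot \LLL{} once the ratio $d_t^c(v)/\ell_t(v)^{k-1}$ is small enough. As written, however, it is a plan rather than a proof: the activation probabilities $p_t$, the explicit recursions and their analysis, the certifiability bound that makes Talagrand go through, and the final numerical check are all gestured at but not executed. If you were asked to supply a proof, those details would all need to be filled in; if you were asked to compare with the paper under review, the correct answer is simply that the paper does not contain one.
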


For $k = 3$, Cooper and Mubayi extended this result to triangle-free hypergraphs \cite{cooper2016coloring}.
A \textbf{\textit{triangle}} in a hypergraph is a set of three pairwise intersecting edges with no common vertex. 
A non-uniform hypergraph $H = (V, E)$ has \textbf{\textit{rank}} $k$ if every edge $e \in E$ satisfies $|e| \leq k$.
In such a hypergraph, the \textbf{\textit{$\ell$-degree}} of a vertex $v$ is the number of edges of size $\ell$ containing $v$.
We let $\Delta_\ell(H)$ denote the maximum $\ell$-degree of a vertex in $H$.
Li and Postle recently extended Cooper and Mubayi's result to all rank $k$ hypergraphs for $k \geq 3$.

\begin{theo}[\cite{li2022chromatic}]\label{theo:LP}
    For $k \geq 3$, the following holds for $\Delta_\ell \in \N$ sufficiently large for each $2 \leq \ell \leq k$.
    Let $H$ be a triangle-free rank-$k$ hypergraph of maximum $\ell$-degree at most $\Delta_\ell$ for each $2 \leq \ell \leq k$.
    Then for some constant $c \defeq c(k) > 0$, we have
    \[\chi_\ell(H) \leq c\max_{2\leq \ell \leq k}\left(\frac{\Delta_\ell}{\log \Delta_\ell}\right)^{1/(\ell-1)}.\]
\end{theo}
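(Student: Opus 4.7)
The plan is to apply the iterative semi-random (``nibble'') method, following the template of Johansson's theorem for triangle-free graphs and its refinements by Molloy and Bernshteyn. Set $q \defeq c \max_{2 \leq \ell \leq k} (\Delta_\ell / \log \Delta_\ell)^{1/(\ell-1)}$ for a large enough constant $c = c(k)$, and suppose every vertex satisfies $|L(v)| \geq q$. We construct a proper $L$-coloring over $T = \Theta(\log \log \Delta)$ rounds, maintaining after the $t$-th round an uncolored subhypergraph equipped with reduced lists $L_t$ and effective $\ell$-degree bounds $\Delta_\ell^{(t)}$ that satisfy an invariant of the form $|L_t(v)| \geq c_t \, (\Delta_\ell^{(t)} / \log \Delta_\ell^{(t)})^{1/(\ell-1)}$ uniformly in $v$ and $\ell$. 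In each round, every uncolored $v$ independently chooses a candidate color uniformly from $L_t(v)$; this candidate is retained unless some edge through $v$ becomes monochromatic, and we delete from $L_t(v)$ any color $c$ for which some edge through $v$ has all of its other vertices already permanently assigned color $c$.

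The heart of the argument is the single-round analysis: for fixed $v$ and $c \in L_t(v)$, we estimate $\Pr[c \notin L_{t+1}(v)]$. The triangle-free condition is what makes this favorable. Two distinct edges $e_1, e_2$ through $v$ have disjoint opposite-side vertex sets $e_1 - v$ and $e_2 - v$, and triangle-freeness forbids any further edge lying in $(e_1 \cup e_2) - v$; consequently, the events ``$e_i$ blocks color $c$'' for distinct $i$, and the events ``$e_1$ blocks $c$'' for distinct $c$, decorrelate to leading order. This near-independence converts the trivial \hyperref[theo:LLL]{\LLL} bound of $\Delta^{1/(k-1)}$ into the improved $(\Delta / \log \Delta)^{1/(k-1)}$: the expected list size and the effective $\ell$-degree both shrink by a common factor $\exp(-\Theta(1))$ in a single round, so the ratio appearing in the invariant improves multiplicatively. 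The maximum over $\ell$ in the statement arises because blocking by $\ell$-edges is analyzed separately, and the binding constraint is the worst-behaved edge size.

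The two remaining ingredients are concentration and the local lemma. Concentration for $|L_{t+1}(v)|$ and the updated $\ell$-degrees around their expectations can be obtained from Talagrand's inequality or, following Bernshteyn's constructive refinement of Molloy, from a Moser--Tardos-style entropy-compression argument; the \hyperref[theo:LLL]{\LLL} is then invoked to ensure that no vertex violates its invariant after the round. After $T$ rounds the list-to-degree ratio becomes $\omega(1)$ and a direct \hyperref[theo:LLL]{\LLL} finishes the coloring. The main obstacle is precisely the single-round concentration in the rank-$k$ setting: one candidate coloring can simultaneously remove colors from $L_t(v)$ via edges of several different sizes, and the dependencies across these removals must be disentangled while preserving the near-independence that triangle-freeness provides. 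This is the step where the argument becomes substantially more delicate than in the graph case and in the $k = 3$ case of Cooper and Mubayi, and it is what forces the $\max_\ell$ structure of the final bound.
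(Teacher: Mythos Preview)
This theorem is not proved in the present paper at all: it is quoted from \cite{li2022chromatic} as background, alongside Frieze--Mubayi's Theorem~\ref{theo:FM}, to motivate the paper's own main result (Theorem~\ref{theo:main_theo}) on $k$-partite $k$-graphs. There is therefore no ``paper's own proof'' to compare your proposal against.

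That said, as a sketch of the Li--Postle argument your outline is broadly in the right spirit (iterated nibble, lists-versus-degree invariant, Local Lemma), but one step is misstated. You assert that for two edges $e_1,e_2$ through $v$ the sets $e_1 - v$ and $e_2 - v$ are disjoint; triangle-freeness does not force this (that would be linearity/simplicity). Triangle-freeness only forbids a \emph{third} edge lying inside $(e_1\cup e_2)-v$ and intersecting both, which is a weaker structural guarantee. The decorrelation you need in the single-round analysis has to be extracted from this weaker condition, and handling overlapping edges of varying sizes is precisely where the rank-$k$ argument differs from the Cooper--Mubayi $k=3$ case. Your proposal acknowledges the difficulty in the last paragraph but leans on the wrong structural fact to justify the ``near-independence'' in the preceding one.
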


In this paper we will be interested in $k$-partite $k$-graphs.
In fact, we establish a result similar to those of Theorems~\ref{theo:FM} and \ref{theo:LP} in the setting of list coloring.
Our main result is the following, which matches the bounds in Theorems~\ref{theo:FM} and \ref{theo:LP} asymptotically.

\begin{theo}\label{theo:main_theo}
    For all $\eps > 0$ and $k \geq 2$, the following holds for $\Delta$ sufficiently large.
    Let $H$ be a $k$-partite $k$-graph of maximum degree at most $\Delta$. 
    Then,
    \[\chi_\ell(H)\leq \left((k-1 + \eps)\frac{\Delta}{\log \Delta}\right)^{1/(k-1)}.\]
\end{theo}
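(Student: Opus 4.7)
I would follow the nibble/iterative random coloring framework established by Johansson and refined by Molloy, Bernshteyn, and (in the hypergraph setting) Li--Postle. The $k$-partite hypothesis plays here the role that triangle-freeness plays for graphs: whenever a color $c$ becomes blocked at a vertex $v$ by an edge $e \ni v$, the $k-1$ ``contributing'' vertices of $e\setminus\{v\}$ sit in $k-1$ different parts, which prevents the parasitic interactions that, in non-partite hypergraphs, would inflate the expected number of blocked colors beyond the ``correct'' value $\deg(v)/\ell^{k-1}$. I would first phrase the theorem in the stronger color-degree language promised in the abstract, so that the key quantities $d_c(v) \defeq |\{e \ni v : c \in L(u) \text{ for all } u \in e\setminus\{v\}\}|$ are bounded by $\Delta$; this both strengthens the conclusion and, more importantly, makes the quantities monotone under the iterative process.

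\textbf{Iterative procedure.} Run a wasteful-coloring process for $T$ rounds, where $T$ is chosen so that the ``nominal list size'' shrinks from $\ell_0 \defeq \big((k-1+\eps)\Delta/\log\Delta\big)^{1/(k-1)}$ to a constant. In each round every uncolored vertex $v$ is activated with some small probability $p$ and, if activated, chooses a color uniformly from its current list $L_t(v)$; a color is \emph{kept} at $v$ unless some edge $e \ni v$ has all other $k-1$ vertices keeping that same color, in which case $v$ is uncolored. Kept colors are removed from the lists of any remaining vertex for which they are now forbidden. Track two families of random variables: the list sizes $|L_t(v)|$ and the color-degrees $d_{c,t}(v)$. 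I would aim for deterministic ``target trajectories''
\[
\ell_{t+1} \,=\, \ell_t\Big(1 - p\cdot\tfrac{d_{c,t}(v)}{\ell_t^{k-1}}\cdot(1+o(1))\Big), \qquad d_{c,t+1}(v) \,=\, d_{c,t}(v)(1-p)^{k-1}(1+o(1)),
\]
and show by induction on $t$, using the Lov\'asz Local Lemma together with a Talagrand/Azuma concentration bound, that $|L_t(v)|$ and $d_{c,t}(v)$ stay within $\Delta^{-\Omega(1)}$ of their targets for every $v$ and every $c$. The $k$-partite structure enters precisely in computing the expectation of $d_{c,t+1}(v)$: each of the $k-1$ partners of $v$ in an edge lies in a different part, so the events ``partner $u_j$ retains $c$ in its list'' are nearly independent, producing the clean factor $(1-p)^{k-1}$ (rather than something messier that would force a worse constant).

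\textbf{Finishing.} Repeat until $\ell_T = \Theta(1)$ and $d_{c,T}(v) / \ell_T^{k-1} = o(1)$. At that point a single application of the symmetric LLL completes the partial coloring to a proper $L$-coloring, since each ``bad event'' (an edge is monochromatic) has probability $\leq \ell_T^{-(k-1)}$ and depends on $O(\Delta\, \ell_T)$ others, so the LLL slack $e\cdot p \cdot d \leq 1$ holds with room to spare. Tracking constants through the iteration yields the factor $k-1+\eps$; the $\eps$ absorbs the $(1+o(1))$ slack accumulated over $T \asymp \log \Delta$ rounds.

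\textbf{Main obstacle.} The delicate point is controlling the concentration of $|L_t(v)|$ and $d_{c,t}(v)$ while keeping the leading constant tight at $k-1$. The event that a specific color is removed from $L_t(v)$ depends on the simultaneous choices of up to $\Delta\cdot(k-1)$ vertices, and ``one bad choice'' can propagate through many coordinates; standard Azuma would give losses that scale badly with $k$. The fix, following Bernshteyn's entropy-compression-style viewpoint or Molloy's inductive LLL analysis, is to bound higher moments of the list-size deviation by the ``correct'' sparsity coming from $k$-partiteness, and to set up the dependency digraph for the LLL so that events at vertices in the same part are genuinely non-adjacent. Getting both the concentration exponent and the constant $k-1+\eps$ simultaneously is where the argument must be executed with care; everything else is bookkeeping around the standard nibble.
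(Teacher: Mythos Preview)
Your proposal has a genuine gap, and it also diverges sharply from what the paper actually does.

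\textbf{The paper's argument is not a nibble.} The paper proves Theorem~\ref{theo:main_theo} via Theorem~\ref{theo:conditions_theo} (specifically condition \ref{cond:2}), and the proof of that condition is a \emph{single-round} random coloring: fix one part $V_j$, assign each vertex in $V(H)\setminus V_j$ a uniformly random color from its list, and show via Harris's inequality, a negative-correlation argument, and one application of the Local Lemma that every $v\in V_j$ still has an available color. There is no iteration, no tracking of trajectories, and no concentration inequality. The $k$-partite structure is exploited as an \emph{asymmetry} (one part is left blank and colored last), not as a substitute for triangle-freeness.

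\textbf{Where your approach breaks.} Your central claim is that ``each of the $k-1$ partners of $v$ in an edge lies in a different part, so the events `partner $u_j$ retains $c$' are nearly independent.'' This is not justified by $k$-partiteness. If $e=\{v,u_1,\ldots,u_{k-1}\}$ with $u_i\in V_i$, nothing prevents an edge $e'$ from containing, say, both $u_1$ and $u_2$ (indeed, every edge meets every part). Then the retention events for $u_1$ and $u_2$ both depend on the color choices along $e'$, and the ``clean factor $(1-p)^{k-1}$'' does not follow. The paper itself flags this: $k$-partite $k$-graphs for $k\geq 3$ need not be triangle-free, and it calls the match with Theorem~\ref{theo:LP} ``all the more surprising'' precisely because the triangle-free mechanism is unavailable. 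So the analogy you draw between $k$-partiteness and triangle-freeness --- the load-bearing idea in your plan --- is the wrong one, and without it the nibble has no reason to maintain the constant $k-1$. You would need a genuinely different argument for the expectation and concentration of $d_{c,t+1}(v)$, and none is offered.
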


In order to prove the above result, we establish a more general statement regarding list colorings.
Before we state the result formally, we make a few definitions regarding list assignments.
Let $H = (V, E)$ be a $k$-graph and let $L\,:\,V\to 2^\N$ be a list assignment for $H$.
We define:
\begin{align*}
    E_H(v, c) &\defeq \set{e\in E_H(v)\,:\,\forall u \in e,\,c\in L(u)}, \\
    \deg_H(v, c) &\defeq |E_H(v, c)|.
\end{align*}
We refer to $\deg_H(v, c)$ as the \textit{color-degree} of $c$ with respect to $v$.
A number of results mentioned earlier can be expressed in terms of the maximum color degree as opposed to the maximum degree.
The following result provides sufficient conditions on the maximum degree, maximum color degree, and list sizes with respect to a list assignment $L$ in order to construct an $L$-coloring for a $k$-partite $k$-graph.

\begin{theo}\label{theo:conditions_theo}
    For all $k \geq 2$, the following holds for $D_i, q_i, \Delta_i \in \N$ sufficiently large for each $i \in [k]$.
    Let $H$ be a $k$-partite $k$-graph with partition $V(H) = V_1\cup\cdots\cup V_k$, and let $L\,:\,V(H) \to 2^\N$ be a list assignment for $H$ such that the following hold for each $v \in V_i,\,c \in L(v)$:
    \begin{enumerate}[label=\ep{\normalfont{}\arabic*}]
        \item\label{item:degree_graph} $\deg_H(v) \leq \Delta_i$,
        \item\label{item:degree_cover} $\deg_H(v, c) \leq D_i$, and
        \item\label{item:list} $|L(v)| \geq q_i$.
    \end{enumerate}
    Suppose additionally, at least one of the following inequalities is satisfied for some $j \in [k]$:
    \begin{enumerate}[label=\ep{\normalfont{C}\arabic*}]
        \item\label{cond:1} $\prod_{i \in [k] - j}q_i \geq D_j\left(e\,q_j\sum_{i \in [k] - j}\frac{q_iD_i}{D_j}\right)^{1/q_j}$.
        \item\label{cond:3} $e\left(\Delta_j\left(\sum_{i \in [k]-j}\Delta_i - 1\right) + 1\right)\left(1 - \left(1 - \prod_{i \in [k] - j}q_i^{-1}\right)^{\Delta_j\,\min_{i \in [k]}q_i/q_j}\right)^{q_j} \leq 1$.
        \item\label{cond:2} $e\left(q_jD_j\left(\sum_{i \in [k] - j}q_iD_i - 1\right) + 1\right)\left(1 - \left(1 - \prod_{i \in [k] - j}q_i^{-1}\right)^{D_j}\right)^{q_j} \leq 1$.
    \end{enumerate}
    Then, $H$ admits a proper $L$-coloring.
\end{theo}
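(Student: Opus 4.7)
The plan is to apply the Lov\'asz Local Lemma via a two-step random coloring that exploits the $k$-partite structure. Fix $j \in [k]$ for which one of \ref{cond:1}, \ref{cond:3}, \ref{cond:2} holds, and, by truncating, assume $|L(v)| = q_i$ for every $v \in V_i$. Sample $\phi(u) \in L(u)$ independently and uniformly at random for each $u \in V(H) \setminus V_j$. For $v \in V_j$, call a color $c \in L(v)$ \emph{blocked} if some $e \in E_H(v, c)$ satisfies $\phi(u) = c$ for every $u \in e - v$, and let $A_v$ be the event that every $c \in L(v)$ is blocked. Since every edge of $H$ meets $V_j$, and distinct vertices of $V_j$ lie in disjoint edge-stars (again by $k$-partiteness), avoiding all $A_v$ will let me greedily assign each vertex of $V_j$ an unblocked color, completing a proper $L$-coloring.

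The main task is to bound $\mathbb{P}[A_v]$. Set $x \defeq \prod_{i \ne j} q_i^{-1}$. For a fixed $c$, the events ``$e$ is monochromatic in $c$'' for $e \in E_H(v, c)$ are positively correlated, so Harris/FKG gives $\mathbb{P}[c \text{ blocked}] \leq 1 - (1-x)^{\deg_H(v,c)}$. The indicators $\bbone[c \text{ blocked}]$ for distinct $c \in L(v)$ are conversely negatively correlated (assigning a vertex color $c$ precludes any other color), and so
\[ \mathbb{P}[A_v] \leq \prod_{c \in L(v)} \bigl(1 - (1-x)^{\deg_H(v,c)}\bigr). \]
Since $n \mapsto \log(1-(1-x)^n)$ is concave, replacing each $\deg_H(v, c)$ by the average $\bar D_v \defeq \tfrac{1}{q_j}\sum_c \deg_H(v, c)$ yields $\mathbb{P}[A_v] \leq \bigl(1 - (1-x)^{\bar D_v}\bigr)^{q_j}$. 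A $k$-partite double count,
\[ \sum_{c \in L(v)} \deg_H(v, c) \;=\; \sum_{e \in E_H(v)} \Big|\bigcap_{u \in e} L(u)\Big| \;\leq\; \Delta_j \min_i q_i, \]
bounds $\bar D_v \leq \Delta_j \min_i q_i / q_j$, giving the probability factor of \ref{cond:3}. Replacing $\bar D_v$ by the cruder term-by-term estimate $D_j$ recovers the probability factor of \ref{cond:2}, and \ref{cond:1} follows from \ref{cond:2} via the Bernoulli bound $1-(1-x)^n \leq nx$.

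For the dependency, $A_v$ is determined by $\{\phi(u) : u \in N_H(v)\}$, so $A_v$ and $A_{v'}$ are independent unless $N_H(v) \cap N_H(v') \ne \emptyset$. A direct double count bounds the number of $v' \ne v$ with this property by $\Delta_j\bigl(\sum_{i \ne j}\Delta_i - 1\bigr)$, which yields \ref{cond:3} after applying the symmetric Lov\'asz Local Lemma. For \ref{cond:2} (and hence \ref{cond:1}) I would instead index bad events by pairs $(v, c)$ with $c \in L(v)$; any fixed vertex of $V_i$ is then touched by at most $q_iD_i$ such pairs, which produces the dependency degree $q_jD_j\bigl(\sum_{i \ne j}q_iD_i - 1\bigr)$ appearing in \ref{cond:2}.

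The delicate step will be the probability bound on $A_v$: the ``blocked'' events interact positively within a fixed color (via shared edges) and negatively across colors (via the one-color-per-vertex constraint), so both the Harris/FKG step and the negative-correlation step require genuine care. Once those are in hand, the averaging identity $\sum_c \deg_H(v, c) \leq \Delta_j \min_i q_i$ is what converts a bound in terms of the maximum color-degree into one in terms of the more forgiving average, and is the source of the otherwise unusual exponent $\Delta_j \min_i q_i / q_j$ in \ref{cond:3}.
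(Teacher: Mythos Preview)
Your approach to \ref{cond:3} and \ref{cond:2} is essentially identical to the paper's: same random partial coloring on $V(H)\setminus V_j$, same Harris/FKG bound within a color, same negative-correlation step across colors, same Jensen averaging (this is the paper's Lemma~\ref{lemma:prob}), and the same two dependency counts for the Local Lemma. Your phrasing ``index bad events by pairs $(v,c)$'' for \ref{cond:2} is a little loose---the bad events remain the $A_v$, and the pairs are only used to \emph{count} dependencies---but the arithmetic you arrive at matches the paper.

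The gap is in your derivation of \ref{cond:1}. Applying the Bernoulli bound $1-(1-x)^{D_j}\le D_jx$ inside the \ref{cond:2} inequality yields
\[
e\,q_jD_j\Bigl(\sum_{i\ne j}q_iD_i\Bigr)\,\Bigl(D_j\prod_{i\ne j}q_i^{-1}\Bigr)^{q_j}\le 1,
\]
which rearranges to $\prod_{i\ne j}q_i \ge D_j\bigl(e\,q_j\,D_j\sum_{i\ne j}q_iD_i\bigr)^{1/q_j}$. Compare this to \ref{cond:1}, which has $\sum_{i\ne j}q_iD_i/D_j$ inside the bracket. Your condition is stronger than \ref{cond:1} by a factor of $D_j^{2/q_j}$ on the right-hand side, and this factor need not be $1+o(1)$ (take, e.g., $q_j\asymp\log D_j$). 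So \ref{cond:1} does \emph{not} follow from \ref{cond:2} via Bernoulli.

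The paper recovers the missing $D_j^{2/q_j}$ by running the Local Lemma on a different family of bad events altogether. Rather than one event per vertex $v\in V_j$, it defines an event $A_S$ for every way of choosing, for each $c\in L(v)$, one edge of $E_H(v,c)$: the event that all $q_j$ chosen edges are monochromatic. Now $\P[A_S]=(\prod_{i\ne j}q_i)^{-q_j}$ \emph{exactly} (no union bound, no Bernoulli loss), while the dependency degree becomes $q_jD_j^{q_j-1}\sum_{i\ne j}q_iD_i$. The point is that the probability improves by a factor $D_j^{q_j}$ relative to your Bernoulli estimate, whereas the dependency only worsens by $D_j^{q_j-2}$, netting exactly the $D_j^2$ you were missing. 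This finer event structure is also what lets the paper push \ref{cond:1} through in the correspondence-coloring setting (Theorem~\ref{theo:cond_corr}), where the negative-correlation step underlying \ref{cond:2} and \ref{cond:3} breaks down.
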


As corollaries, we get a few asymmetric list coloring results that we will state here and prove in \S\ref{sec:corollaries}.
First, we consider the case that the geometric mean of the maximum color degrees of the first $k-1$ partitions is much larger than that of the $k$-th.
This result matches that of Alon, Cambie, and Kang for $k = 2$ \cite[Corollary~8]{alon2021asymmetric}.

\begin{corl}\label{corl:d_to_eps}
    For all $\eps > 0$ and $k \geq 2$, the following holds for $D_i\in \N$ sufficiently large for each $i \in [k]$.
    Let $H$ be a $k$-partite $k$-graph with partition $V(H) = V_1\cup\cdots\cup V_k$, and let $L\,:\,V(H) \to 2^\N$ be a list assignment for $H$ such that the following hold for each $v \in V_i,\,c \in L(v)$:
    \begin{enumerate}[label=\ep{\normalfont{}\arabic*}]
        \item $\deg_H(v, c) \leq D_i$,
        \item $|L(v)| \geq D_i^{\eps/(k-1)}$, and
        \item\label{item:geom_bound} $\prod_{j \in [k-1]}D_j \geq D_k^{2(k-1)/\eps}$.
    \end{enumerate}
    Then, $H$ admits a proper $L$-coloring.
\end{corl}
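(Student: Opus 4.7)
The plan is to apply Theorem~\ref{theo:conditions_theo} with $q_i \defeq \lfloor D_i^{\eps/(k-1)} \rfloor$ for each $i \in [k]$ and checking hypothesis~\ref{cond:1} with $j = k$. Since $|L(v)| \geq D_i^{\eps/(k-1)} \geq q_i$ for every $v \in V_i$, condition~\ref{item:list} holds, and condition~\ref{item:degree_cover} is given by assumption. For condition~\ref{item:degree_graph} I take $\Delta_i$ to be the maximum degree in $V_i$, padded if necessary to exceed the threshold required by Theorem~\ref{theo:conditions_theo}; such padding is harmless because \ref{cond:1} does not involve $\Delta_i$.

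The core of the argument is to verify the inequality
\[
\prod_{i \in [k-1]} q_i \;\geq\; D_k \left(\frac{e\,q_k}{D_k}\sum_{i \in [k-1]} q_i D_i\right)^{1/q_k}.
\]
Writing $P \defeq \prod_{i \in [k-1]} D_i$ and using $q_i D_i \leq D_i^{1+\eps/(k-1)} \leq P^{1+\eps/(k-1)}$, the sum inside is at most $(k-1) P^{1+\eps/(k-1)}$, while $1/q_k$ is at most $2 D_k^{-\eps/(k-1)}$ for large $D_k$. After taking logarithms, the task reduces to showing
\[
\tfrac{\eps}{k-1}\log P \;\geq\; \log D_k \,+\, \tfrac{C(\log P + \log D_k)}{D_k^{\eps/(k-1)}},
\]
for some constant $C = C(k,\eps)$. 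Hypothesis~\ref{item:geom_bound} rearranges to $\tfrac{\eps}{k-1}\log P \geq 2\log D_k$, giving a surplus of at least $\log D_k \geq \tfrac{\eps}{2(k-1)}\log P$ over the leading $\log D_k$ on the right-hand side.

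The main obstacle will be absorbing the correction $C(\log P + \log D_k)/D_k^{\eps/(k-1)}$ into this surplus, since $P$ may be much larger than $D_k$. However, $\log D_k \leq \tfrac{\eps}{2(k-1)}\log P$ gives $\log P + \log D_k \leq (1 + \tfrac{\eps}{2(k-1)})\log P$, so the correction is a $(k,\eps)$-constant multiple of $\log P / D_k^{\eps/(k-1)}$, which is dwarfed by $\tfrac{\eps}{2(k-1)}\log P$ once $D_k$ exceeds a threshold depending only on $k$ and $\eps$. This is precisely where the factor $2$ in the exponent of hypothesis~\ref{item:geom_bound} (rather than the asymptotically critical~$1$) provides the needed slack; an invocation of Theorem~\ref{theo:conditions_theo} then completes the proof.
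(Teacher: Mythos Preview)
Your proposal is correct and follows essentially the same route as the paper: both verify condition~\ref{cond:1} of Theorem~\ref{theo:conditions_theo} with $j=k$ and $q_i = D_i^{\eps/(k-1)}$, using hypothesis~\ref{item:geom_bound} to create a factor-of-two slack that absorbs the $(\,\cdot\,)^{1/q_k}$ correction. The paper presents the same estimate multiplicatively (splitting $\prod_{i\in[k-1]}q_i$ as $D_k^{1/2}\cdot(\prod D_i^{\eps/4})^{1/(k-1)}\cdot D_k$ and bounding each factor), whereas you take logarithms and work additively; the underlying inequalities are identical.
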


Next, we consider the situation where most of the lists are ``small''.
The result matches that of \cite[Corollary~10]{alon2021asymmetric} for $k = 2$.

\begin{corl}\label{corl:const_k-1}
    For all $\eps > 0$ and $k \geq 2$, the following holds for $\Delta \in \N$ sufficiently large.
    Let $H$ be a $k$-partite $k$-graph with partition $V(H) = V_1\cup\cdots\cup V_k$ and maximum degree at most $\Delta$.
    Let $L\,:\,V(H) \to 2^\N$ be a list assignment for $H$ such that at least one of the following holds for $b \defeq \left(\frac{2^{k-1}}{2^{k-1} - 1}\right)^k$:
    \begin{enumerate}[label=\ep{\normalfont{}\arabic*}]
        \item\label{item:k} For each $v \in V(H)\setminus V_k$, $|L(v)| \geq 2$, and for each $v \in V_k$, $|L(v)| \geq \left(\frac{2+\eps}{k}\right)\Delta/\log_{b} \Delta$.
        \item\label{item:log} For each $v \in V(H)\setminus V_k$, $|L(v)| \geq \log \Delta$, and for each $v \in V_k$, $|L(v)| \geq \left(1+\eps\right)\Delta/(\log \Delta)^{k-1}$.
    \end{enumerate}
    Then, $H$ admits a proper $L$-coloring.
\end{corl}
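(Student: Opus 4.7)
The plan is to apply Theorem~\ref{theo:conditions_theo} with $j = k$ and verify condition~\ref{cond:3}, taking $\Delta_i = D_i = \Delta$ for every $i \in [k]$ (valid since color-degree is bounded by degree). In both parts the list-size parameters $q_i$ are chosen so that the inner quantity $(1 - \prod_{i<k} q_i^{-1})^{\Delta_k\min_i q_i/q_k}$ telescopes to a concrete negative power of $\Delta$, and then the factor $(1 - \cdot)^{q_k}$ appearing in \ref{cond:3} becomes super-polynomially small in $\Delta$, easily defeating the polynomial prefactor $e((k-1)\Delta^2 + 1)$.

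For part~\ref{item:k}, I set $q_i = 2$ for each $i < k$ and $q_k = \lfloor\tfrac{2+\eps}{k}\Delta/\log_b\Delta\rfloor$. Then $\min_i q_i = 2$ once $\Delta$ is large and $\prod_{i<k}q_i^{-1} = 2^{-(k-1)}$. The value $b = (2^{k-1}/(2^{k-1}-1))^k$ is chosen precisely so that $\log b = k\log(2^{k-1}/(2^{k-1}-1))$, from which
\[
\left(1 - 2^{-(k-1)}\right)^{2\Delta/q_k} \;=\; \exp\!\left(-\tfrac{2\log\Delta}{2+\eps}\right) \;=\; \Delta^{-2/(2+\eps)}.
\]
Using $\Delta_k \leq \Delta$, the inner factor is at most $1 - \Delta^{-2/(2+\eps)}$, and the outer $q_k$\=/th power is at most $\exp(-q_k\Delta^{-2/(2+\eps)}) = \exp(-\Theta(\Delta^{\eps/(2+\eps)}/\log\Delta))$, which dominates the prefactor for $\Delta$ sufficiently large.

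For part~\ref{item:log}, I set $q_i = \lceil\log\Delta\rceil$ for each $i < k$ and $q_k = \lceil(1+\eps)\Delta/(\log\Delta)^{k-1}\rceil$, so $\min_i q_i = \lceil\log\Delta\rceil$ and the exponent $\Delta\log\Delta/q_k$ simplifies to $(\log\Delta)^k/(1+\eps)$. A parallel computation gives
\[
\left(1 - (\log\Delta)^{-(k-1)}\right)^{(\log\Delta)^k/(1+\eps)} \;=\; \Delta^{-1/(1+\eps) + o(1)},
\]
and raising to $q_k$ produces a factor bounded by $\exp(-\Theta(\Delta^{\eps/(1+\eps)}/(\log\Delta)^{k-1}))$, again dominating the prefactor. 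The only nontrivial observation is spotting that the constants $b$ and $(\log\Delta)^{k-1}$ in the statement were chosen precisely so that the inner expression collapses to a clean negative power of $\Delta$; after that, the verification is a routine application of $(1-y)^n \leq e^{-ny}$ together with the $\eps$\=/slack available in each final exponent.
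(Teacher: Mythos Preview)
Your proposal is correct and follows essentially the same approach as the paper: both verify condition~\ref{cond:3} of Theorem~\ref{theo:conditions_theo} with $j=k$ and all $\Delta_i=\Delta$, compute the inner quantity $(1-\prod_{i<k}q_i^{-1})^{\Delta\min_iq_i/q_k}$ explicitly as a negative power of $\Delta$ with exponent strictly greater than $-1$, and then use $(1-y)^{q_k}\le e^{-yq_k}$ to obtain super-polynomial decay that beats the $O(k\Delta^2)$ prefactor. The only cosmetic difference is that the paper first relaxes both cases to the common lower bound $\Delta^{-(1-\eps/4)}$ before concluding, whereas you keep the case-specific exponents $2/(2+\eps)$ and $1/(1+\eps)+o(1)$.
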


Finally, we consider an asymmetric form of the main result of this paper, which matches that of Cambie and Kang for $k = 2$ \cite[Theorem~2.1]{cambie2022independent} (their proof holds in the more general setting of correspondence coloring defined below). 

\begin{corl}\label{theo:asym}
    For all $\eps > 0$ and $k \geq 2$, the following holds for $D_i$ sufficiently large for each $i \in [k]$.
    Let $H$ be a $k$-partite $k$-graph with partition $V(H) = V_1\cup\cdots\cup V_k$, and let $L\,:\,V(H) \to 2^\N$ be a list assignment for $H$ such that the following hold for each $v \in V_i,\,c \in L(v)$:
    \begin{enumerate}[label=\ep{\normalfont{}\arabic*}]
        \item\label{item:degree_cover} $\deg_H(v, c) \leq D_i$, and
        \item\label{item:list} $|L(v)| \geq \left((k-1 + \eps)\frac{D_i}{\log D_i}\right)^{1/(k-1)}$.
    \end{enumerate}
    Then, $H$ admits a proper $L$-coloring.
\end{corl}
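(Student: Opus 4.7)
The plan is to apply Theorem~\ref{theo:conditions_theo} with
\[
q_i \defeq \left\lceil \left(\frac{(k-1+\eps)D_i}{\log D_i}\right)^{1/(k-1)}\right\rceil,
\]
which automatically satisfies the color-degree and list-size hypotheses of that theorem and reduces the problem to verifying one of \ref{cond:1}, \ref{cond:3}, \ref{cond:2} for some $j \in [k]$. By relabeling partitions I may assume $D_1 \leq D_2 \leq \cdots \leq D_k$, so $q_1 = \min_i q_i$. Set $\eps' \defeq \min(\eps, 1/2)$ and split into two cases according to whether $\prod_{i \geq 2} D_i \geq D_1^{2(k-1)/\eps'}$.

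In the first case, I apply Corollary~\ref{corl:d_to_eps} with parameter $\eps'$, after relabeling so that $V_1$ plays the role of the distinguished partition $V_k$ in that corollary. For $D_i$ sufficiently large, the list-size hypothesis $|L(v)| \geq D_i^{\eps'/(k-1)}$ there follows from our stronger bound $((k-1+\eps)D_i/\log D_i)^{1/(k-1)}$, and the product condition is exactly the hypothesis of the present case, so Corollary~\ref{corl:d_to_eps} produces the desired $L$-coloring.

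In the second case, every $D_i$ is at most $D_1^{2(k-1)/\eps'}$, so in particular $D_{\max}$ is a power of $D_1$ depending only on $\eps$ and $k$. I verify \ref{cond:2} with $j = 1$. Since $q_i \geq q_1$ for all $i$, the product $P \defeq \prod_{i \geq 2} q_i$ satisfies $P \geq q_1^{k-1} \geq (k-1+\eps)D_1/\log D_1$, and hence $D_1/P \leq (\log D_1)/(k-1+\eps)$. Applying $\log(1-x) \geq -x - x^2$ for $x \in (0, 1/2]$ yields $(1 - 1/P)^{D_1} \geq D_1^{-1/(k-1+\eps)}(1 - o(1))$, so
\[
\left(1 - \left(1 - \tfrac{1}{P}\right)^{D_1}\right)^{q_1} \;\leq\; \exp\!\left(-c\, q_1 D_1^{-1/(k-1+\eps)}\right) \;=\; \exp\!\left(-c\,\frac{D_1^{\delta}}{(\log D_1)^{1/(k-1)}}\right),
\]
where $\delta \defeq \eps/((k-1)(k-1+\eps)) > 0$ and $c > 0$ depends only on $\eps$ and $k$. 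The prefactor $e(q_1 D_1(\sum_{i \geq 2} q_i D_i - 1) + 1)$ is likewise a power of $D_1$ whose logarithm is $O(\log D_1)$, and this is eventually dominated by $D_1^{\delta}/(\log D_1)^{1/(k-1)}$, confirming \ref{cond:2}.

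The main obstacle I anticipate is accommodating arbitrary disparity among the $D_i$. A direct attempt at \ref{cond:2} with $j = 1$ fails when $D_{\max}$ is super-polynomial in $D_1$, since the prefactor then overwhelms the exponential gain; meanwhile Corollary~\ref{corl:d_to_eps} does not apply in the near-symmetric regime where all $D_i$ are comparable. The threshold $D_1^{2(k-1)/\eps'}$ is chosen precisely so that one of these two tools handles each input.
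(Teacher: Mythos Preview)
Your proposal is correct and follows essentially the same strategy as the paper: relabel so that the minimal $D_i$ is distinguished, split according to whether $\prod_{i\neq j} D_i$ exceeds a fixed power of $D_{\min}$, invoke Corollary~\ref{corl:d_to_eps} in the high-disparity case, and verify \ref{cond:2} with $j$ the index of $D_{\min}$ in the bounded-disparity case. The only cosmetic difference is your choice of threshold $D_{\min}^{2(k-1)/\eps'}$ with $\eps'=\min(\eps,1/2)$ versus the paper's $D_{\min}^{2(k-1)}$; your version is arguably cleaner in that it makes the appeal to Corollary~\ref{corl:d_to_eps} (which requires $|L(v)|\ge D_i^{\eps'/(k-1)}$ and the matching product condition with the \emph{same} parameter) completely transparent.
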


By setting $D_i = D$ for each $i \in [k]$, we obtain the following immediate corollary, which is a ``color-degree'' version of Theorem~\ref{theo:main_theo} (setting $D_i = \Delta(H)$ yields Theorem~\ref{theo:main_theo}).

\begin{corl}\label{cor:main_result_color_degree}
    For all $\eps > 0$ and $k \geq 2$, the following holds for $D$ sufficiently large.
    Let $H$ be a $k$-partite $k$-graph, and let $L\,:\,V(H) \to 2^\N$ be a list assignment for $H$ such that the following hold for each $v \in V(H),\,c \in L(v)$:
    \[\deg_H(v, c) \leq D, \quad |L(v)| \geq \left((k-1 + \eps)\frac{D}{\log D}\right)^{1/(k-1)}.\]
    Then, $H$ admits a proper $L$-coloring.
\end{corl}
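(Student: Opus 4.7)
The plan is to deduce this statement as a direct specialisation of Corollary~\ref{theo:asym}: apply that corollary to $H$ and $L$ with the per-part color-degree bounds set to $D_1 = D_2 = \cdots = D_k = D$. Under the assumption $\deg_H(v, c) \leq D$ for every $v \in V(H)$ and every $c \in L(v)$, we have in particular $\deg_H(v, c) \leq D = D_i$ whenever $v \in V_i$, so the color-degree hypothesis of Corollary~\ref{theo:asym} is satisfied. The assumed lower bound $|L(v)| \geq \left((k-1+\eps)D/\log D\right)^{1/(k-1)}$ equals $\left((k-1+\eps)D_i/\log D_i\right)^{1/(k-1)}$ once $D_i = D$, so the list-size hypothesis holds as well. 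The ``$D_i$ sufficiently large'' clause collapses to a single ``$D$ sufficiently large'' clause, and Corollary~\ref{theo:asym} then supplies the desired proper $L$-coloring of $H$.

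Since this reduction is purely notational there is no genuine obstacle at this stage; all of the mathematical content lives inside Corollary~\ref{theo:asym}, which itself is obtained from Theorem~\ref{theo:conditions_theo} by verifying one of the listed sufficient conditions with the natural parameter choices $q_i = \left((k-1+\eps)D_i/\log D_i\right)^{1/(k-1)}$ and $D_i = D$. I would also note that the same specialisation with $D_i = \Delta(H)$ for every $i$ recovers Theorem~\ref{theo:main_theo}, so the present corollary is the stronger ``color-degree'' counterpart of the main theorem, and no additional work beyond invoking Corollary~\ref{theo:asym} is required.
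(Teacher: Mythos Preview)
Your proposal is correct and matches the paper's own derivation: the paper explicitly obtains this corollary by setting $D_i = D$ for each $i \in [k]$ in Corollary~\ref{theo:asym}, and notes that the further specialisation $D_i = \Delta(H)$ yields Theorem~\ref{theo:main_theo}. No additional argument is needed.
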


We conclude this section with a few remarks and potential directions for future research.
First, we note the following conjecture of Alon and Krivelevich:

\begin{conj}[{\cite[Conjecture~5.1]{alon1998choice}}]\label{conj:ak}
    For any bipartite graph $G$ of maximum degree at most $\Delta$, we have $\chi_\ell(G) = O(\log \Delta)$.
\end{conj}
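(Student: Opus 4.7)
The final statement is Conjecture~\ref{conj:ak}, a well-known open problem of Alon and Krivelevich; the best published upper bound for the list chromatic number of a bipartite graph of maximum degree $\Delta$ is the Johansson--Molloy bound $\chi_\ell(G) = (1+o(1))\Delta/\log \Delta$, which uses only triangle-freeness and leaves an exponential gap to the target $O(\log \Delta)$. I therefore do not expect the techniques of this paper to settle it; what follows is the most natural semi-random attack derived from the proof framework of Theorem~\ref{theo:main_theo}, together with the obstacle that seems to block it.

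The plan is to adapt the nibble argument underlying Theorem~\ref{theo:main_theo} (specialized to $k=2$) to the bipartite setting $V(G) = V_1 \cup V_2$. Fix a list assignment $L$ with $|L(v)| \geq q \defeq C\log \Delta$. Run alternating one-sided rounds: each $v \in V_1$ independently samples a uniform color from its current list and commits to it (no edge lies within $V_1$); the lists on $V_2$ are then pruned by removing colors taken by their $V_1$-neighbors; the roles of $V_1$ and $V_2$ are then swapped. After each round, use Talagrand's inequality together with the \LLL{} to show that list sizes and color-degrees concentrate around their expectations, and iterate until the color-degrees drop below a threshold at which a direct \LLL{} invocation finishes the coloring.

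The main obstacle is the parameter balance. After one $V_1$-round, a color $c$ survives in $L(v)$ for $v \in V_2$ with probability roughly $(1-1/q)^{\deg_G(v,c)}$; for the pruned list to be nonempty in expectation one needs $q\,(1-1/q)^\Delta = \Omega(1)$, i.e.\ $\Delta = O(q\log q) = O(\log \Delta \cdot \log\log \Delta)$, which is essentially the regime the conjecture already predicts is colorable. Iterating does not help either: each round shrinks color-degrees by at most a factor $\exp(-O(1/q)) = 1 - O(1/\log \Delta)$, so driving $\Delta$ down to $O(\log \Delta)$ would require $\Omega(\log^2 \Delta)$ rounds, during which the list sizes would be driven far below $q$. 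In effect, the semi-random method sees only triangle-freeness and cannot distinguish bipartite graphs from arbitrary locally sparse ones; a proof of Conjecture~\ref{conj:ak} appears to require a genuinely new ingredient that exploits the absence of odd cycles, beyond the techniques of this paper.
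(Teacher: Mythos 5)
This statement is Conjecture~\ref{conj:ak}, an open problem of Alon and Krivelevich which the paper merely cites and does not (and does not claim to) prove; there is no proof in the paper to compare against. You correctly identify it as open, and your discussion of why the semi-random/\LLL{} framework of the paper stalls at list sizes of order $\Delta/\log\Delta$ rather than $\log\Delta$ is accurate and consistent with the paper's own positioning of Corollary~\ref{corl:const_k-1}\ref{item:log} as only partial progress.
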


In the same paper, Alon and Krivelevich show that the conjecture holds with high probability for Erd\H{o}s--R\'enyi random bipartite graphs.
In \cite{meroueh2019list}, the authors prove a similar result for Erd\H{o}s--R\'enyi random $k$-partite $k$-graphs $\mathcal{H}(k, n, p)$.
(Here, each partition has size $n$ and each valid edge is included independently with probability $p$, where an edge is valid if it contains exactly one endpoint in each partition.)

\begin{theo}[{\cite[Theorem~1.3]{meroueh2019list}}]
    Let $H \sim \mathcal{H}(k, n, p)$ such that $n^{k-1}p \geq \Delta_0$ for $\Delta_0$ large enough.
    Then, $\chi_\ell(H) = \Theta_{k}\left(\log(n^{k-1}p)\right)$ almost surely.
\end{theo}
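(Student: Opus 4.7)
\smallskip
\noindent\textbf{Proof proposal.} The statement has two directions, and the two sides call for very different methods. Throughout, write $D \defeq n^{k-1}p$ for the expected degree and note that, by Chernoff plus a union bound, $\Delta(H) \leq 2D$ and indeed the degree of every vertex is $(1\pm o(1))D$ almost surely.

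For the \textbf{lower bound} $\chi_\ell(H) \geq c_k \log D$, my plan is a first-moment argument in the spirit of Erd\H{o}s--Rubin--Taylor. Set $q \defeq c_k \log D$ with $c_k$ small and $m \defeq 2q$, and let $L(v)$ be an independent uniformly random $q$-subset of $[m]$ for each $v \in V(H)$. Since $\Pr_L[\phi(v) \in L(v)] = q/m = 1/2$, the expected number of proper $L$-colorings is
\[
    \E_L[N] \;=\; 2^{-kn}\cdot \bigl|\{\text{proper } [m]\text{-colorings of } H\}\bigr|,
\]
and the number of proper $[m]$-colorings is at most $m^{kn}\bigl(1 - m^{-(k-1)}\bigr)^{|E(H)|}$, since a uniformly random $[m]$-coloring renders each edge monochromatic with probability $m^{-(k-1)}$. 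With $|E(H)| = (1+o(1))\,n^k p = (1+o(1))\,nD$, the inequality $(m/2)^{kn} \exp(-|E(H)|/m^{k-1}) < 1$ boils down to $\log q \lesssim D/(k(2q)^{k-1})$, which holds comfortably for $q = c_k \log D$. Hence $\E_L[N] < 1$, so there exists a realization of $L$ with no proper $L$-coloring, giving $\chi_\ell(H) > q$ on the event that $H$ has roughly its expected number of edges (itself a high-probability event).

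For the \textbf{upper bound} $\chi_\ell(H) \leq C_k \log D$, Theorem~\ref{theo:conditions_theo} only yields $q = \Omega(D^{1/(k-1)})$, so the proof must genuinely use that $H$ is random. My plan is a semi-random (``nibble'') scheme: condition on the high-probability event that $H$ is typical (bounded degree, concentrated co-degrees, and mild edge-expansion for small vertex sets), fix an arbitrary list assignment $L$ with $|L(v)| \geq q \defeq C_k \log D$, and color $H$ in $O(\log D)$ rounds. In each round, every still-uncolored vertex $v$ independently activates with small probability $\alpha$, picks a tentative color uniformly from its current list, and keeps it unless the pick would complete a monochromatic edge with previously committed colors. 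Using the independence of edges in $\mathcal{H}(k,n,p)$ conditionally on the typical event, the residual list size $\ell(v)$ and residual color-degree $d(v,c)$ of each surviving pair $(v,c)$ contract by the same deterministic factor in expectation, and concentrate via Talagrand/bounded-difference inequalities. After $O(\log D)$ nibbles the color-degree has been driven down to $O(1)$ while the ratio $|L(v)|^{k-1}/\deg(v,c)$ has been preserved up to constants, at which point Theorem~\ref{theo:conditions_theo} (condition~\ref{cond:2}) finishes the coloring via the Lov\'asz Local Lemma.

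The \textbf{main obstacle} is steering the nibble through the rounds: after the first activation, the residual hypergraph depends on the partial coloring, and the edge-independence that powered the initial concentration is partially spoiled. The standard remedy is Molloy--Reed's ``equalizing coin flips'' (uncolor a color after activation with a small balancing probability so that $\E[\ell(v)/|L(v)|]$ and $\E[d(v,c)/D]$ decay at a uniform rate), combined with a Talagrand-type inequality on the products of the nibble outputs. An alternative that avoids iteration is an entropy-compression/Moser--Tardos analysis: start from a uniformly random $L$-coloring and resample each monochromatic edge; the witness-tree count is dominated by the dependency degree $k\Delta \leq 2kD$, and because the ``bad edges'' arise only when $k$ independent uniform picks from lists of size $\geq q$ all coincide, one gets termination as soon as $q \geq C_k \log D$. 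Either route reduces the problem to verifying the now-standard concentration estimates on the random hypergraph, which I would carry out round-by-round using the edge-independence of $\mathcal{H}(k,n,p)$.
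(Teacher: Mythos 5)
This statement is quoted verbatim from \cite{meroueh2019list} and is not proved anywhere in the paper, so there is no internal proof to compare against; judging your argument on its own terms, both halves have genuine gaps.

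The lower bound is the more serious problem. Your bound of $m^{kn}(1-m^{-(k-1)})^{|E(H)|}$ on the number of proper $[m]$-colorings is false, and in this regime spectacularly so: with $m=2q=2c_k\log D$ that quantity is less than $1$, which would say $H$ admits no proper $m$-coloring at all, whereas every $k$-partite $k$-graph is $2$-colorable. Concretely, any coloring assigning colors from $\{1,\dots,q\}$ to $V_1$ and colors from $\{q+1,\dots,2q\}$ to the remaining vertices is proper, so $H$ has at least $q^{kn}$ proper $[2q]$-colorings; since each fixed coloring is an $L$-coloring with probability exactly $2^{-kn}$ under your random lists, $\E_L[N]\geq (q/2)^{kn}\gg 1$, and the first-moment method over random lists can never close. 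The failure of the product bound reflects the strong positive correlation of the events ``edge $e$ is non-monochromatic'' across partition-respecting colorings. A correct lower bound (in the spirit of Erd\H{o}s--Rubin--Taylor and Alon--Krivelevich) requires a \emph{designed} list assignment together with the pseudorandom property that every $k$-tuple of sufficiently large subsets, one per part, spans an edge; a union bound over choice functions then replaces the first moment over colorings.

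The upper bound sketch also does not go through as written. With $|L(v)|=\Theta(\log D)$ and color-degree $\Theta(D)$, a color $c\in L(v)$ survives one activation round at rate $\alpha$ with probability roughly $\left(1-(\alpha/q)^{k-1}\right)^{D}\approx \exp\left(-\alpha^{k-1}D/q^{k-1}\right)$, which is vanishingly small unless $\alpha^{k-1}\ll q^{k-1}/D$ --- and at that activation rate essentially nothing gets colored, so the color-degrees do not contract and the iteration never makes progress. The Moser--Tardos alternative fares no better: the relevant LLL condition is $e\cdot q^{-(k-1)}\cdot kqD\leq 1$, i.e.\ $q=\Omega\left(D^{1/(k-1)}\right)$, which is exactly the trivial bound; your claim that termination holds once $q\geq C_k\log D$ is unsupported. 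Indeed, getting from $D^{1/(k-1)}$ down to $O(\log D)$ for worst-case bipartite graphs is precisely Conjecture~\ref{conj:ak}, so any proof for $\mathcal{H}(k,n,p)$ must exploit structural features of the random hypergraph beyond degree concentration (e.g.\ expansion or container-type counting of monochromatic configurations), which your outline does not supply.
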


In a similar flavor, Haxell and Verstraete showed that the complete $k$-partite $k$-graph with $n$ vertices in each partition (denoted $K_{k*n}$) satisfies $\chi_\ell(K_{k*n}) \leq (1+o_n(1))\log_kn$ \cite{haxell2010list}.
Note that $\Delta(H) \approx n^{k-1}p$ for $H \sim \mathcal{H}(k, n, p)$ with high probability, and $\Delta(K_{k*n}) = n^{k-1}$.
In particular, the above results can be expressed in terms of the maximum degree of the respective graphs.
In light of this, we make the following conjecture, which matches Conjecture~\ref{conj:ak} for $k = 2$.
(Note that Corollary~\ref{corl:const_k-1}\ref{item:log} provides partial progress toward it.)

\begin{conj}
    For all $k \geq 2$, there is a constant $c \defeq c(k) > 0$ such that the following holds for $\Delta$ large enough.
    Let $H$ be a $k$-partite $k$-graph of maximum degree at most $\Delta$.
    Then, we have $\chi_\ell(H) \leq c\log \Delta$.
\end{conj}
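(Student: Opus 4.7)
The plan is to attempt an iterative partial-coloring (Rödl nibble / Molloy--Reed style) procedure, terminated by a deterministic finishing step furnished by Theorem~\ref{theo:conditions_theo}. Concretely, starting from lists of size $q \defeq c(k)\log \Delta$, one runs $T = \Theta(\log \Delta)$ rounds in each of which every pair $(v, c)$ with $c \in L(v)$ is activated independently with probability $\alpha/q$ for a small constant $\alpha = \alpha(k)$. A color $c$ activated at $v$ is kept only if no edge $e \ni v$ has all other $k-1$ endpoints also activating $c$. After each round, vertices that received a color are removed, and for each surviving $v$ and $c \in L(v)$ we delete $c$ from $L(v)$ whenever some edge $e \ni v$ has its other $k-1$ endpoints all colored $c$. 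Continue until the ratio $\max_{v, c} \deg_H(v, c)/|L(v)|^{k-1}$ drops below the threshold at which Theorem~\ref{theo:conditions_theo} applies, and finish deterministically.

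The analysis rests on maintaining, with high probability, the two invariants $|L(v)| \geq q_t$ and $\deg_H(v, c) \leq D_t$ throughout, with $(q_t)$ decaying only mildly while $(D_t)$ shrinks geometrically. In each round, the probability a given color survives at $v$ is approximately $(\alpha/q)\bigl(1 - (\alpha/q)^{k-1}\bigr)^{D_t}$, and one would show that $\deg_H(v, c)$ decreases by a factor close to $\exp\bigl(-\alpha^{k-1} D_t / q^{k-1}\bigr)$. The parameters $\alpha$, $c(k)$, and the per-round decay rates are tuned so that $\Theta(\log \Delta)$ rounds suffice and the key ratio $D_t / q_t^{k-1}$ stays bounded by a constant depending only on $k$. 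Concentration in each round is established via Talagrand's inequality applied to the number of kept colors at each $v$ and to each color-degree $\deg_H(v, c)$, conditioned on the state entering the round, followed by a Lovász Local Lemma union over all vertices and colors to carry the invariants simultaneously.

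The hard part will be the concentration step. Because blocking a color $c$ at $v$ requires a $(k-1)$-fold coincidence of activations along a common edge, the events controlling the evolution of $|L(v)|$ and $\deg_H(v, c)$ are of lower order in $1/q$ than in the graph case, so their variances are dominated by rare but large-impact coincidences. Controlling these typically requires truncation arguments and the exceptional-outcome version of Talagrand's inequality, and aligning the truncation thresholds with the target invariants $(q_t, D_t)$ is non-trivial. A more fundamental obstacle is that the conjecture subsumes Conjecture~\ref{conj:ak} at $k=2$, which is a notorious open problem, so any honest execution of this plan would have to introduce a genuinely new idea already in the bipartite case; absent such an idea, the realistic deliverable via this route is a weaker polylogarithmic bound obtained by combining a shortened nibble with Corollary~\ref{corl:const_k-1}\ref{item:log} as the finishing lemma.
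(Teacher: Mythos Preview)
The statement is a \emph{conjecture} in the paper, not a theorem; the paper offers no proof and explicitly presents it as open, noting only that Corollary~\ref{corl:const_k-1}\ref{item:log} gives partial progress. So there is nothing in the paper to compare your argument against, and the relevant question is simply whether your outline could succeed.

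It cannot, and the failure is already visible at round~$0$, before any concentration issues arise. You propose to maintain the invariant that $D_t/q_t^{k-1}$ stays bounded by a constant depending only on~$k$, but initially $D_0 = \Delta$ and $q_0 = c(k)\log\Delta$, so $D_0/q_0^{k-1} = \Delta/(c\log\Delta)^{k-1}$, which is unbounded. More concretely, in the very first round the expected number of colors deleted from a list $L(v)$ is of order
\[
q_0 \cdot D_0 \cdot (\alpha/q_0)^{k-1} \;=\; \alpha^{k-1}\,\Delta / q_0^{k-2},
\]
which for every fixed $k\ge 2$ is $\gg q_0$ when $q_0 = \Theta(\log\Delta)$; the lists are wiped out in one step. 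Meanwhile the color-degree only drops by a constant factor per round (a neighbor survives uncolored with probability $\approx 1-\alpha$), so there is no mechanism by which the ratio $D_t/q_t^{k-1}$ improves. The nibble heuristic you describe is exactly what yields the $\bigl(\Delta/\log\Delta\bigr)^{1/(k-1)}$ scale of Theorem~\ref{theo:main_theo}, not $\log\Delta$. You correctly flag at the end that the $k=2$ case is the open Alon--Krivelevich conjecture; that observation is not a caveat but the whole story, and the honest conclusion is that this proposal does not constitute a proof attempt so much as an explanation of why the standard toolkit stalls at the bound the paper already proves.
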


In \cite{cambie2022independent}, Cambie and Kang proved a result identical to Corollary~\ref{cor:main_result_color_degree} for $k = 2$ in the more general setting of \textit{correspondence coloring} also known as DP-coloring.
(In fact, their main result is a bipartite version of Theorem~\ref{theo:conditions_theo} for correspondence coloring.)
The concept was first introduced by Dvo\v{r}\'ak and Postle for graphs in \cite{DPCol}, while the extension to hypergraphs appeared in \cite{bernshteyn2019dp}.

\begin{defn}[Correspondence Cover]\label{def:corr_cov}
    A \emphd{correspondence cover} (also known as a \emphd{DP-cover}) of a $k$-graph $H$ is a pair $(L, \mathcal{H})$, where $\mathcal{H}$ is a $k$-graph and $L \,:\,V(H) \to 2^{V(\mathcal{H})}$ such that:
    \begin{itemize}
        \item The sets $L(v)\,:\,v\in V(H)$ partition $V(\mathcal{H})$,
        \item For each $v \in V(H)$ and $S \subseteq L(v)$ such that $|S| \geq 2$, $\deg_\mathcal{H}(S) = 0$, and
        \item For each $k$-element set $e \subseteq  V(H)$, the induced subgraph $\mathcal{H}\left[\bigcup_{v \in e}L(v)\right]$ is a hypermatching, which is empty if $e \notin E(H)$.
    \end{itemize}
\end{defn}

We call the vertices of $\mathcal{H}$ \emphd{colors}.
For $e \in E(\mathcal{H})$, we say that the colors in $e$ \emphd{correspond} to each other.
An \emphd{$(L,\mathcal{H})$-coloring} is a mapping $\phi \colon V(H) \to V(\mathcal{H})$ such that $\phi(v) \in L(v)$ for all $v \in V(H)$. 
An $(L,\mathcal{H})$-coloring $\phi$ is \emphd{proper} if the image of $\phi$ is an independent set in $\mathcal{H}$.
A correspondence cover $(L,\mathcal{H})$ is \emphdef{$q$-fold} if $|L(v)| \geq q$ for all $v \in V(H)$. 
The \emphdef{correspondence chromatic number} of $H$, denoted by $\chi_{c}(H)$, is the smallest $q$ such that $H$ admits a proper $(L,\mathcal{H})$-coloring with respect to every $q$-fold correspondence cover $(L,\mathcal{H})$.
A curious feature of correspondence covers for $k$-graphs satisfying $k \geq 3$ is that a color $c \in V(\mathcal{H})$ can correspond to different colors in the same list $L(v)$.
This causes our arguments for Theorem~\ref{theo:conditions_theo} to fail in this setting.
Nevertheless, we conjecture the bound in Corollary~\ref{cor:main_result_color_degree} holds for correspondence coloring as well.

\begin{conj}\label{conj}
    For all $\eps > 0$ and $k \geq 2$, the following holds for $D$ sufficiently large.
    Let $H$ be a $k$-partite $k$-graph, and let $(L,\mathcal{H})$ be a correspondence cover of $H$ such that the following hold for each $v \in V(H)$:
    \[\Delta(\mathcal{H}) \leq D, \quad |L(v)| \geq \left((k-1 + \eps)\frac{D}{\log D}\right)^{1/(k-1)}.\]
    Then, $H$ admits a proper $(L, \mathcal{H})$-coloring.
\end{conj}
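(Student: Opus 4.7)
The plan is to adapt the iterative semi-random coloring technique (the ``R\"odl nibble'' / wasteful coloring procedure) used in the proofs of Theorems~\ref{theo:FM} and \ref{theo:LP} to the correspondence setting, since the one-shot \LLL{} argument underlying \ref{cond:2} is precisely the step the excerpt identifies as failing for DP covers on $k$-graphs with $k\geq 3$. I initialize $q_0 \defeq \lceil((k-1+\eps)D/\log D)^{1/(k-1)}\rceil$, $L_0 \defeq L$, $\mathcal{H}_0 \defeq \mathcal{H}$, $D_0 \defeq D$, trim all lists to size exactly $q_0$ (legal, since it only decreases degrees in the cover), and run $T = \Theta(\log\log D)$ rounds of partial coloring, after which the residual instance has $q_T^{k-1}$ substantially exceeding $D_T$ and can be closed by a direct application of Theorem~\ref{theo:cond_corr}.

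At round $t$, with parameters $(q_t, D_t)$ and activation intensity $p_t \asymp (\log q_t^{k-1})/D_t$, each uncolored vertex $v$ independently samples a uniform candidate $a_v \in L_t(v)$ and an independent keep-bit $\xi_v \sim \mathrm{Bern}(p_t)$. Call $v$ tentatively colored by $a_v$ when $\xi_v=1$; then uncolor $v$ if some $\tilde e \in E(\mathcal{H}_t)$ through $a_v$ has all of its other endpoints tentatively colored by the matching colors of $\tilde e$. Finally, for every $v$ that remains uncolored, form $L_{t+1}(v)$ by deleting every $c \in L_t(v)$ such that some $\tilde e \in E(\mathcal{H}_t)$ through $c$ has all of its other endpoints permanently colored according to $\tilde e$. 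A direct calculation yields $\E[|L_{t+1}(v)|] \approx q_t(1 - p_t/q_t^{k-1})^{D_t}$ and $\E[\deg_{\mathcal{H}_{t+1}}(c)] \approx D_t(1 - p_t/q_t^{k-1})^{D_t}$; a standard fixed-point analysis shows that the invariant $q_t^{k-1}\log D_t / D_t \geq k-1+\eps/2$ is preserved with slack across all $T$ rounds, so the terminal parameters land in the regime of Theorem~\ref{theo:cond_corr}.

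The main obstacle is concentration of $|L_{t+1}(v)|$ and of $\deg_{\mathcal{H}_{t+1}}(c)$ around these expectations, which is needed to close each round via the \LLL. The very feature highlighted in the excerpt --- that a single color $c \in L(u)$ can be linked by different edges of $\mathcal{H}_t$ to several distinct colors in $L(v)$ --- means that changing one coordinate $(a_u, \xi_u)$ may simultaneously remove many colors from $L_{t+1}(v)$, so the $1$-Lipschitz property that underpins the use of Talagrand's inequality in the list-coloring nibble arguments fails. Bypassing this requires either a refined bounded-differences inequality in which the per-coordinate Lipschitz constant is controlled by a local ``correspondence multiplicity'' parameter of the cover (which itself must be shown uniformly small after the trimming step), or a change of random experiment --- for instance, drawing proposals from random independent sets in $\mathcal{H}_t$ rather than by independent uniform picks in each $L_t(v)$ --- that decouples the conflicting colors at each vertex. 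Designing such a concentration tool for DP covers of $k$-partite $k$-graphs is, in my view, the only genuinely new ingredient required; the remaining iterative bookkeeping can then parallel the Li--Postle argument, with Theorem~\ref{theo:cond_corr} supplying the base case of the recursion.
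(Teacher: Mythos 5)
The statement you are trying to prove is stated in the paper as Conjecture~\ref{conj} and is left open there; the author explicitly remarks that condition \ref{cond:2}, which drives the proof of the symmetric list-coloring bound, does not extend to correspondence covers, and offers no proof. So there is no argument in the paper to compare yours against, and your proposal must be judged on its own terms as an attempted resolution of an open problem.

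Judged that way, it is a plausible research programme but not a proof, and the gap is one you yourself name: the concentration step. Everything before it is heuristic (the displayed expectations are first-moment calculations, and the claim that the invariant $q_t^{k-1}\log D_t/D_t \geq k-1+\eps/2$ ``is preserved with slack'' is asserted, not derived), and everything after it is conditional on a concentration inequality that you do not supply. The obstruction is real and is exactly the ``curious feature'' the paper points out: in a correspondence cover of a $k$-partite $k$-graph with $k\geq 3$, two vertices $u,v$ can lie in many common hyperedges of $H$, and a single color $c\in L(u)$ can be matched by distinct cover-edges to up to $\deg_{\mathcal H}(c)\leq D$ \emph{different} colors of $L(v)$. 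Hence resampling the single coordinate $(a_u,\xi_u)$ can change $|L_{t+1}(v)|$ by $\Theta(D)$ rather than $O(1)$, so Talagrand/McDiarmid-type arguments give bounds that are far too weak, and neither of your two proposed fixes (a multiplicity-weighted bounded-differences inequality, or sampling proposals from random independent sets of $\mathcal H_t$) is developed to the point where one could check it; the ``correspondence multiplicity'' is not in general made small by trimming lists, since it is a property of the cover, not of the list sizes. Until that single step is actually carried out, the argument does not establish the conjecture — which is consistent with the paper leaving it as a conjecture. As a smaller point, your closing step via Theorem~\ref{theo:cond_corr} is sound in principle (that theorem is proved in the paper for correspondence covers), so the base case of your recursion is fine; it is only the iterative step that is missing.
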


Cambie and Kang conjectured that a similar result as theirs should hold for triangle-free graphs as most triangle-free graphs are close to bipartite.
However, we note that this property does not extend to $k$-partite $k$-graphs for $k \geq 3$.
In fact, such graphs need not be triangle-free, which makes it all the more surprising that our result matches that of Theorem~\ref{theo:LP}.
Furthermore, as noted in earlier work of the author \cite{corr}, the approach toward proving results on $k$-partite $k$-graphs is similar to those employed in other problems related to bipartite graphs.
It would be worth investigating when combinatorial results on bipartite graphs extend to the $k$-partite setting.

The rest of the paper is structured as follows.
In \S\ref{sec:prelim}, we will describe the probabilistic tools we will employ in our proofs.
In \S\ref{sec:corollaries}, we will show how the corollaries stated in this section follow from Theorem~\ref{theo:conditions_theo}, which we will prove in \S\ref{sec:proof}.

\section{Preliminaries}\label{sec:prelim}

In this section we describe probabilistic tools that will be used to prove Theorem~\ref{theo:conditions_theo}.
We start with the symmetric version of the Lov\'asz Local Lemma.

\begin{theo}[{Lov\'asz Local Lemma; \cite[Corollary~5.1.2]{AlonSpencer}}]\label{theo:LLL}
    Let $A_1$, $A_2$, \ldots, $A_n$ be events in a probability space. Suppose there exists $p \in [0, 1)$ such that for all $1 \leq i \leq n$ we have $\P[A_i] \leq p$. Further suppose that each $A_i$ is mutually independent from all but at most $d_{LLL}$ other events $A_j$, $j\neq i$ for some $d_{LLL} \in \N$. If $ep(d_{LLL}+1) \leq 1$, then with positive probability none of the events $A_1$, \ldots, $A_n$ occur.
\end{theo}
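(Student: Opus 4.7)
The plan is to prove the symmetric Lov\'asz Local Lemma as a corollary of a more general asymmetric version, which is the standard approach. Specifically, I would first establish the following stronger statement: if there exist reals $x_1, \ldots, x_n \in [0,1)$ such that for each $i$ one has $\P[A_i] \leq x_i \prod_{j \in N(i)} (1 - x_j)$, where $N(i) \subseteq [n] - i$ indexes a set of events outside of which $A_i$ is mutually independent, then $\P\bigl[\bigcap_{i=1}^n \overline{A_i}\bigr] \geq \prod_{i=1}^n (1 - x_i) > 0$.

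The heart of the argument is the inductive claim that for every $S \subseteq [n]$ and every $i \notin S$,
\[
\P\Bigl[A_i \,\Big|\, \bigcap_{j \in S}\overline{A_j}\Bigr] \leq x_i.
\]
The base case $S = \emptyset$ is immediate from the hypothesis. For the inductive step, partition $S = S_1 \sqcup S_2$ with $S_1 = S \cap N(i)$ and $S_2 = S \setminus N(i)$. Writing $E_t = \bigcap_{j \in S_t} \overline{A_j}$, I would expand
\[
\P[A_i \mid E_1 \cap E_2] \;=\; \frac{\P[A_i \cap E_1 \mid E_2]}{\P[E_1 \mid E_2]}.
\]
The numerator is bounded above using mutual independence of $A_i$ from the events indexed by $S_2$: namely $\P[A_i \cap E_1 \mid E_2] \leq \P[A_i \mid E_2] = \P[A_i] \leq x_i \prod_{j \in N(i)}(1 - x_j)$. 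The denominator is bounded below by enumerating $S_1 = \{j_1, \ldots, j_r\}$ and writing $\P[E_1 \mid E_2]$ as a telescoping product of conditional probabilities $\P[\overline{A_{j_s}} \mid \overline{A_{j_1}} \cap \cdots \cap \overline{A_{j_{s-1}}} \cap E_2]$; applying the inductive hypothesis (which is valid since each conditioning set has size strictly less than $|S|$) to each factor yields a lower bound of $\prod_{j \in S_1}(1 - x_j) \geq \prod_{j \in N(i)}(1 - x_j)$. The factors of $\prod_{j \in N(i)}(1 - x_j)$ then cancel, giving the desired bound $x_i$. Once the inductive claim is in hand, chain-rule expansion
\[
\P\Bigl[\bigcap_{i=1}^n \overline{A_i}\Bigr] \;=\; \prod_{i=1}^n \P\Bigl[\overline{A_i} \,\Big|\, \bigcap_{j < i}\overline{A_j}\Bigr] \;\geq\; \prod_{i=1}^n (1 - x_i) \;>\; 0
\]
completes the asymmetric version.

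To derive the symmetric form as stated, I would set $x_i \defeq 1/(d_{LLL}+1)$ for every $i$. Using the standard estimate $(1 - 1/(d_{LLL}+1))^{d_{LLL}} \geq 1/e$ together with $|N(i)| \leq d_{LLL}$, one computes
\[
x_i \prod_{j \in N(i)}(1 - x_j) \;\geq\; \frac{1}{d_{LLL}+1} \cdot \frac{1}{e} \;\geq\; p \;\geq\; \P[A_i],
\]
where the middle inequality is exactly the hypothesis $e p (d_{LLL}+1) \leq 1$. This verifies the condition of the asymmetric version and yields positivity of $\P\bigl[\bigcap \overline{A_i}\bigr]$.

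The main obstacle is the inductive step, specifically the careful handling of conditioning: one must use the full mutual-independence hypothesis (not just pairwise independence) to justify $\P[A_i \cap E_1 \mid E_2] \leq \P[A_i \mid E_2] = \P[A_i]$, and one must verify that the conditioning sets appearing in the telescoping lower bound for $\P[E_1 \mid E_2]$ all have size smaller than $|S|$ so that the inductive hypothesis applies. A minor edge case is ensuring $\P[E_2] > 0$ throughout so that the conditional probabilities are well-defined, which follows from the inductive conclusion applied to smaller subsets.
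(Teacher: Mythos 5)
Your proposal is correct and is essentially the standard argument: the paper itself does not prove this statement but cites it from Alon--Spencer (Corollary~5.1.2), whose proof is exactly the derivation you give, namely the asymmetric local lemma via the inductive bound $\P[A_i \mid \bigcap_{j\in S}\overline{A_j}] \leq x_i$ followed by the specialization $x_i = 1/(d_{LLL}+1)$ and the estimate $(1-1/(d_{LLL}+1))^{d_{LLL}} \geq 1/e$. No gaps; your handling of the telescoping denominator and the positivity of the conditioning events matches the referenced proof.
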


We will also need the following special case of the FKG inequality, dating back to Harris \cite{Harris} and Kleitman \cite{Kleitman}.
The original theorem is stated with regards two decreasing families, however, as the intersection of decreasing families is decreasing, it can be shown that the inequality holds in the following more general form.

\begin{theo}[{Harris's inequality/Kleitman's Lemma \cite[Theorem 6.3.2]{AlonSpencer}}]\label{theo:harris}
    Let $X$ be a finite set and let $S \subseteq X$ be a random subset of $X$ obtained by selecting each $x \in X$ independently with probability $p_x \in [0,1]$. If $\mathcal{A}_1, \ldots, \mathcal{A}_n$ are decreasing families of subsets of $X$, then 
    \[\P\left[S \in \bigcap_{i \in [n]}\mathcal{A}_i\right] \,\geq\, \prod_{i \in [n]}\P[S \in \mathcal{A}_i].\]
\end{theo}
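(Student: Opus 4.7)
The plan is to reduce to the case $n=2$ by induction on $n$, and then to prove the two-family inequality by induction on $|X|$. For the reduction, the key observation is that the intersection of decreasing families is itself decreasing: if $T \subseteq S$ and $S \in \bigcap_{i \in [n-1]} \mathcal{A}_i$, then $T \in \mathcal{A}_i$ for every $i \in [n-1]$, hence $T$ lies in the intersection. Granting the statement for $n=2$, the inductive step gives
\[\P\left[S \in \bigcap_{i \in [n]} \mathcal{A}_i\right] \,\geq\, \P\left[S \in \bigcap_{i \in [n-1]} \mathcal{A}_i\right]\P[S \in \mathcal{A}_n] \,\geq\, \prod_{i \in [n]} \P[S \in \mathcal{A}_i],\]
where the second inequality uses the inductive hypothesis on $n-1$ families.

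For the case $n=2$, I would induct on $|X|$. When $|X|=0$ the event $\{S \in \mathcal{A}_i\}$ is deterministic and both sides agree. For the inductive step, fix $x \in X$, set $X' \defeq X - x$, and let $S' \defeq S \cap X'$, so $S'$ is a $p$-random subset of $X'$ (with the same marginals $p_y$ for $y \in X'$). For each decreasing family $\mathcal{A} \subseteq 2^X$, define the two ``slices''
\[\mathcal{A}^- \defeq \set{T \subseteq X' : T \in \mathcal{A}}, \qquad \mathcal{A}^+ \defeq \set{T \subseteq X' : T \cup \set{x} \in \mathcal{A}}.\]
A direct check shows that both $\mathcal{A}^-$ and $\mathcal{A}^+$ are decreasing families in $2^{X'}$, and since $\mathcal{A}$ is decreasing, $T \cup \set{x} \in \mathcal{A}$ implies $T \in \mathcal{A}$, so $\mathcal{A}^+ \subseteq \mathcal{A}^-$. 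In particular, writing $a_i \defeq \P[S' \in \mathcal{A}_i^-]$ and $b_i \defeq \P[S' \in \mathcal{A}_i^+]$, we have $a_i \geq b_i$.

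Conditioning on whether $x \in S$ and applying the inductive hypothesis on $X'$ to each of the two resulting two-family intersections gives
\[\P[S \in \mathcal{A}_1 \cap \mathcal{A}_2] \,=\, (1-p_x)\P[S' \in \mathcal{A}_1^- \cap \mathcal{A}_2^-] + p_x\P[S' \in \mathcal{A}_1^+ \cap \mathcal{A}_2^+] \,\geq\, (1-p_x)a_1a_2 + p_xb_1b_2.\]
On the other hand, the same conditioning yields $\P[S \in \mathcal{A}_i] = (1-p_x)a_i + p_xb_i$, so the desired inequality becomes
\[(1-p_x)a_1a_2 + p_xb_1b_2 \,\geq\, \bigl[(1-p_x)a_1 + p_xb_1\bigr]\bigl[(1-p_x)a_2 + p_xb_2\bigr].\]
Expanding the right-hand side, the difference of the two sides collapses to $p_x(1-p_x)(a_1-b_1)(a_2-b_2)$, which is nonnegative by the monotonicity $a_i \geq b_i$. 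This completes the induction.

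The main obstacle is really a conceptual rather than computational one: one must recognize the correct decomposition into the slices $\mathcal{A}^\pm$ and verify that monotonicity is preserved under both restriction and contraction, so that the inductive hypothesis applies on $X'$. Once that bookkeeping is in place, the heart of the argument is the identity $(1-p_x)a_1a_2 + p_xb_1b_2 - [(1-p_x)a_1+p_xb_1][(1-p_x)a_2+p_xb_2] = p_x(1-p_x)(a_1-b_1)(a_2-b_2)$, a one-line calculation.
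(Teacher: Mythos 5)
Your proof is correct. Note that the paper does not actually prove this statement: it cites the two-family case from Alon--Spencer and merely remarks that the $n$-family version follows because the intersection of decreasing families is decreasing --- which is exactly your reduction step. The extra content you supply, the proof of the $n=2$ case by induction on $|X|$ via the slices $\mathcal{A}^{\pm}$ and the identity $(1-p_x)a_1a_2 + p_xb_1b_2 - [(1-p_x)a_1+p_xb_1][(1-p_x)a_2+p_xb_2] = p_x(1-p_x)(a_1-b_1)(a_2-b_2)$, is the standard Harris argument and is also correct; it makes the statement self-contained where the paper relies on the citation.
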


\section{Proof of Corollaries}\label{sec:corollaries}

We may assume $|L(v)| = q_i$ for each $v \in V_i$ by arbitrarily removing colors from $L(v)$ if needed.
Let us show how each of the corollaries stated in \S\ref{sec:intro} follow from Theorem~\ref{theo:conditions_theo}.

\begin{proof}[Proof of Corollary~\ref{corl:d_to_eps}]
    We will show that \ref{cond:1} is satisfied.
    To this end, we note the following:
    \begin{align*}
        \prod_{i \in [k-1]}q_i &= \left(\prod_{i \in [k-1]}D_i^\eps\right)^{1/(k-1)} \\
        &\geq \,D_k^{1/2}\,\left(\prod_{i \in [k-1]}D_i^{\eps/4}\right)^{1/(k-1)}\,D_k
    \end{align*}
    It is easy to see the following for $D_k$ large enough:
    \[D_k^{1/2} \,\geq\, 1 \,\geq\, \left(eq_k/D_k\right)^{1/q_k}.\]
    Furthermore, assuming $D_k \geq (8(k-1)/\eps)^{(k-1)/\eps}$, we have
    \[\left(\prod_{i \in [k-1]}D_i^{\eps/4}\right)^{q_k/(k-1)} \,\geq\quad \prod_{i \in [k-1]}D_i^{2} \quad\gg\quad \sum_{i \in [k-1]}D_i^{1+\eps/(k-1)},\]
    completing the proof.
\end{proof}

\begin{proof}[Proof of Corollary~\ref{corl:const_k-1}]
    We will show that \ref{cond:3} is satisfied.
    Let us first consider \ref{item:k}.
    We have
    \begin{align*}
        \left(1 - \prod_{i \in [k-1]}q_i^{-1}\right)^{\Delta_j\,\min_{i \in [k]}q_i/q_k} &= \left(1 - \frac{1}{2^{k-1}}\right)^{\Delta\,q_1/q_k} \\
        &= \left(\frac{2^{k-1}}{2^{k-1} - 1}\right)^{-2\Delta\,\frac{k\log_b\Delta}{(2+\eps)\Delta}} \\
        &= \Delta^{-\frac{1}{1+\eps/2}} \\
        &\geq \Delta^{-(1 - \eps/4)}.
    \end{align*}
    Similarly, for \ref{item:log} we have
    \begin{align*}
        \left(1 - \prod_{i \in [k-1]}q_i^{-1}\right)^{\Delta_j\,\min_{i \in [k]}q_i/q_k} &= \left(1 - \frac{1}{(\log \Delta)^{k-1}}\right)^{\Delta\,q_1/q_k} \\
        &\geq \exp\left(-\frac{1}{(1-\eps/2)(\log\Delta)^{k-1}}\,\Delta\,q_1/q_k\right) \\
        &= \exp\left(-\frac{\log\Delta}{(1+\eps)(1-\eps/2)}\right) \\
        &\geq \Delta^{-(1-\eps/4)}.
    \end{align*}
    In either case, it follows that
    \begin{align*}
        \left(1 - \left(1 - \prod_{i \in [k-1]}q_i^{-1}\right)^{\Delta_k\,\min_{i \in [k-1]}q_i/q_k}\right)^{q_k} &\leq \exp\left(-q_k\,\Delta^{-(1-\eps/4)}\right) \\
        &\leq \exp\left(-\Delta^{\eps/10}\right).
    \end{align*}
    Since
    \[\left(\Delta_k\left(\sum_{i \in [k-1]}\Delta_i - 1\right) + 1\right) \quad\leq\quad 2k\Delta^2 \quad\ll\quad \exp\left(\Delta^{\eps/10}\right),\]
    condition \ref{cond:3} is satisfied.
\end{proof}

\begin{proof}[Proof of Corollary~\ref{theo:asym}]
    Without loss of generality, let $k = \arg\min_{j \in [k]}D_j$.
    We will consider two cases. 
    First, suppose $\prod_{i \in [k-1]}D_i \geq D_k^{2(k-1)/\eps}$.
    Then, the claim follows by Corollary~\ref{corl:d_to_eps} as $q_i \geq D_i^{\eps/(k-1)}$ for $D_i$ large enough.

    Now, suppose $\prod_{i \in [k-1]}D_i \leq D_k^{2(k-1)/\eps}$.
    Note the following: 
    \begin{align*}
        \left(1 - \prod_{i \in [k-1]}q_i^{-1}\right)^{D_k} &= \left(1 - \frac{1}{(k-1+\eps)}\left(\prod_{i \in [k-1]}\frac{\log D_i}{D_i}\right)^{1/(k-1)}\right)^{D_k} \\
        &\geq \exp\left(-\left(\frac{1 - \eps/(10k)}{(k-1+\eps)(1 - \eps/(4k))}\right)D_k\left(\prod_{i \in [k-1]}\frac{\log D_i}{D_i}\right)^{1/(k-1)}\right) \\
        &\geq \exp\left(-\left(\frac{1 - \eps/(10k)}{k-1}\right)\log D_k\right),
    \end{align*}
    where the last step follows since $D_i \geq D_k$ for each $i \in [k-1]$.
    From here, we can further simplify:
    \begin{align*}
        \left(1 - \left(1 - \prod_{i \in [k-1]}q_i^{-1}\right)^{D_k}\right)^{q_k} &\leq \exp\left(-q_k\,\exp\left(-\left(\frac{1 - \eps/(10k)}{k-1}\right)\log D_k\right)\right) \\
        &\leq \exp\left(-\left((k-1 + \eps)\frac{D_k^{\eps/(10k)}}{\log D_k}\right)^{1/(k-1)}\right) \\
        &\leq \exp\left(-D_k^{\eps/(20k^2)}\right).
    \end{align*}
    Since $\prod_{i \in [k-1]}D_i \leq D_k^{2(k-1)/\eps}$, we have
    \[q_jD_j \leq D_j^{k/(k-1)} \leq \prod_{i \in [k-1]}D_i^{k/k-1} \leq D_k^{2k/\eps}.\]
    In particular,
    \[q_kD_k\left(\sum_{i \in [k-1]}q_iD_i - 1\right) \quad\leq\quad kD_k^{3k/\eps} \quad\ll\quad \exp\left(D_k^{\eps/(20k^2)}\right).\]
    The claim now follows by \ref{cond:2}.
\end{proof}

\section{Proof of Theorem~\ref{theo:conditions_theo}}\label{sec:proof}

To prove Theorem~\ref{theo:conditions_theo}, we will construct a random partial coloring and show that it can be extended to the entire hypergraph.
Before we describe this procedure, we make the following definitions regarding a partial $L$-coloring $\phi\,:\,V(H) \pto \N$:
\begin{align*}
    \forall S \subseteq V(H),\, \phi(S) &\defeq \set{\phi(v) \,:\, v \in S}, \\
    \forall v \in V(H),\, L_\phi(v) &\defeq \set{c \in L(v)\,:\, \forall e \in E_H(v, c),\, \phi(e-c) \not = \set{c}}.
\end{align*}
In particular, $L_\phi(v)$ contains colors which may be assigned to $v$ to extend the coloring.
For each $v \in V(H) \setminus V_j$ we will independently pick $\phi(v) \in L(v)$ uniformly at random.
We will show that with positive probability, $L_\phi(v) \neq \0$ for each $v \in V_j$, completing the proof.
We will split this section into three subsections, containing the proofs under the conditions \ref{cond:1}, \ref{cond:3}, and \ref{cond:2}, respectively.

\subsection{Proof assuming \ref{cond:1}}\label{subsec:c1}

In order to prove Theorem~\ref{theo:conditions_theo} under the assumption of \ref{cond:1}, it is useful to consider the following correspondence cover of $H$:
\begin{itemize}
    \item Let $L$ be the list assignment of $H$ and let $\mathcal{H}$ be the cover graph whose vertices correspond to the colors in the lists defined by $L$.
    \item For each edge $\set{v_1, \ldots, v_k} \in E(H)$ and colors $c_i \in L(v_i)$, include the edge $\set{c_1, \ldots, c_k} \in E(\mathcal{H})$ if and only if $c_1 = \cdots = c_k$.
\end{itemize}
It can be verified that this defines a correspondence cover (also known as a \textit{list cover}) and that a proper $(L, \mathcal{H})$-coloring of $H$ is a proper $L$-coloring of $H$.
Before we begin the proof, we make a few definitions:
\begin{align*}
    \forall S \subseteq V(H),\, L(S) &\defeq \bigcup_{v \in S}L(v), \\
    \forall c \in V(\mathcal{H}),\, L^{-1}(c) &\defeq v \text{ such that } c \in L(v), \\
    \im(\phi) &\defeq \set{\phi(v)\,:\,v \in V(H)}, 
\end{align*}
Let us define an auxiliary hypergraph $\tilde H$ as follows:
\begin{itemize}
    \item $V(\tilde{\mathcal{H}}) \defeq V(\mathcal{H}) \setminus L(V_j)$.
    \item $S\subseteq V(\tilde{\mathcal{H}})$ forms an edge in $\tilde{\mathcal{H}}$ if the following hold:
    \begin{itemize}
        \item For all $c_1, c_2 \in S$, we have $L^{-1}(c_1) \neq L^{-1}(c_2)$, and
        \item there is a perfect hypermatching in $\mathcal{H}[S\cup L(v)]$ for some $v \in V_j$.
    \end{itemize}
\end{itemize}
We make the following observations about $\tilde{\mathcal{H}}$:
\begin{enumerate}[label=\ep{\normalfont{Obs}\arabic*}]
    \item\label{obs:size} As $\mathcal{H}[S\cup L(v)]$ contains a perfect hypermatching for some $v \in V_j$ and no two vertices in $S$ lie in the same list, $S$ must contain precisely $q_j$ vertices from $L(V_i)$ for each $i \in [k] - j$. 

    \item\label{obs:deg} For any $i \in [k]-j$ and $c \in L(V_i)$, $\deg_{\tilde{\mathcal{H}}}(c) \leq D_i\,D_j^{q_j - 1}$.
    This follows as there are at most $D_i$ choices for the matching edge $e$ containing $c$.
    From $e$, we may determine the vertex $v \in V_j$.
    For each remaining color $c' \in L(v)$, there are at most $D_j$ choices for the matching edge containing $c'$.
\end{enumerate}
For each edge $S \in E(\tilde{\mathcal{H}})$, let us define the following event:
\[A_S \defeq \bbone\set{S\subseteq \im(\phi)}.\]
If $A_S = 0$ for every $S \in E(\tilde{\mathcal{H}})$, then $L_\phi(v) \neq \0$ for each $v \in V_j$.
We note that this would not be the case for an arbitrary correspondence cover and this is where our argument fails in the DP-coloring setting.
From \ref{obs:size} and since no two vertices in $S$ lie in the same list, we have
\[\P[A_S = 1] \quad = \quad \prod_{c \in S}\frac{1}{|L(L^{-1}(c))|} \quad = \quad \left(\prod_{i \in [k]-j}q_i\right)^{-q_j}.\]
Let us now bound the number of events $A_{S'}$ such that $A_S$ is \textbf{not} mutually indpendent of $A_{S'}$.
The following is a valid upper bound as a result of \ref{obs:size} and \ref{obs:deg}:
\begin{align*}
    \sum_{c \in S}\sum_{c' \in L(L^{-1}(c))}\deg_{\tilde H}(c') - 1 &= \sum_{i \in [k] - j}\sum_{c \in S\cap L(V_i)}\sum_{c' \in L(L^{-1}(c))}\deg_{\tilde H}(c') - 1 \\
    &\leq \sum_{i \in [k] - j}\sum_{c \in S\cap L(V_i)}q_iD_i\,D_j^{q_j - 1} - 1 \\
    &= q_jD_j^{q_j - 1}\sum_{i \in [k] - j}q_iD_i - 1.
\end{align*}
We will apply the \hyperref[theo:LLL]{\LLL} with 
\[p \defeq \left(\prod_{i \in [k]-j}q_i\right)^{-q_j}, \quad d_{LLL} \defeq q_jD_j^{q_j - 1}\sum_{i \in [k] - j}q_iD_i - 1\]
to get:
\begin{align*}
    ep(d_{LLL} + 1) &= e\,\left(\prod_{i \in [k]-j}q_i\right)^{-q_j}\,\left(q_jD_j^{q_j - 1}\sum_{i \in [k] - j}q_iD_i\right) \\
    &\leq \frac{eq_jD_j^{q_j -1}\sum_{i \in [k] - j}q_iD_i}{\left(\prod_{i \in [k]-j}q_i\right)^{q_j}}.
\end{align*}
The above is at most $1$ as a result of \ref{cond:1}, completing the proof.

\subsection{Proof assuming \ref{cond:3}}

Recall that we define $\phi$ by randomly picking a color in $L(v)$ for each vertex $v \in V(H) \setminus V_j$.
The following lemma provides a bound on the probability $L_\phi(v) = \0$ for $v \in V_j$.

\begin{Lemma}\label{lemma:prob}
    For each $v \in V_j$, we have 
    \[\P[L_\phi(v) = \0] \leq \left(1 - \left(1 - \prod_{i \in [k] - j}q_i^{-1}\right)^{\sum_{c \in L(v)}\deg_H(v, c)/q_j}\right)^{q_j}.\]
\end{Lemma}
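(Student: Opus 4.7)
The plan is to decompose the event $L_\phi(v) = \0$ into a conjunction over colors $c \in L(v)$ of ``$c$-blocked'' events $X_c$, to bound each $\P[X_c]$ via Theorem~\ref{theo:harris} applied to the independent indicators pertaining to color $c$, to multiply these bounds using a negative-correlation principle across colors, and to finish with Jensen's inequality applied to a concave function.

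For fixed $c \in L(v)$, set $X_c \defeq \set{c \notin L_\phi(v)} = \bigcup_{e \in E_H(v, c)} \set{\phi(e - v) = \set{c}}$, and for each $u \in V(H) \setminus V_j$ with $c \in L(u)$ set $I_{u, c} \defeq \bbone\set{\phi(u) = c}$. Since the $\phi(u)$ are mutually independent across $u$, for each fixed $c$ the variables $\set{I_{u, c}}_u$ are mutually independent Bernoullis with $\P[I_{u, c} = 1] = q_i^{-1}$ when $u \in V_i$. Each $e \in E_H(v, c)$ contributes the decreasing event $\set{\phi(e - v) \neq \set{c}}$ (namely, the subset of ``colored-$c$'' vertices fails to contain $e - v$), so Theorem~\ref{theo:harris} yields
\[\P\!\left[\bigcap_{e \in E_H(v, c)} \set{\phi(e - v) \neq \set{c}}\right] \,\geq\, (1 - p)^{\deg_H(v, c)}, \qquad p \defeq \prod_{i \in [k] - j} q_i^{-1},\]
equivalently $\P[X_c] \leq 1 - (1 - p)^{\deg_H(v, c)}$.

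Next, to combine across $c$, I will appeal to the fact that the full family $\set{I_{u, c}}_{u, c}$ is negatively associated: at a single vertex $u$ the row $(I_{u, c})_{c \in L(u)}$ is a ``choose-one'' distribution (since $\sum_c I_{u, c} = 1$), which is negatively associated, and these rows are mutually independent across $u$. Since each $X_c$ depends only on the block $\set{I_{u, c}}_u$ and is increasing in this block, while the blocks are disjoint for distinct $c$, negative association delivers
\[\P[L_\phi(v) = \0] \,=\, \P\!\left[\bigcap_{c \in L(v)} X_c\right] \,\leq\, \prod_{c \in L(v)} \P[X_c] \,\leq\, \prod_{c \in L(v)} \left(1 - (1 - p)^{\deg_H(v, c)}\right).\]
A direct computation gives $f''(d) = -(\log(1 - p))^2 (1 - p)^d \big/ \bigl(1 - (1 - p)^d\bigr)^2 < 0$ for $f(d) \defeq \log\!\left(1 - (1 - p)^d\right)$, so $f$ is concave on $(0, \infty)$. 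Jensen's inequality over the $q_j = |L(v)|$ terms then gives $\sum_{c \in L(v)} f(\deg_H(v, c)) \leq q_j \, f(\bar d)$ with $\bar d \defeq q_j^{-1} \sum_{c \in L(v)} \deg_H(v, c)$; exponentiating produces the claimed bound.

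The principal technical obstacle is the cross-color passage from the intersection to a product. Theorem~\ref{theo:harris} applies in its stated form only to product Bernoulli measures, yet the joint law of $\set{I_{u, c}}_{u, c}$ is constrained by $\sum_c I_{u, c} = 1$ at each vertex and is not a product measure, so Harris cannot be invoked on all indicators at once. The resolution is to rely on the standard negative-association property of single-choice distributions (the proof of which itself reduces to Harris on an auxiliary product space); alternatively, one could argue by sequentially conditioning on the coloring of vertices outside the neighborhood of $v$ and checking that each conditional contribution respects the product form.
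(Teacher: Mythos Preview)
Your proof is correct and mirrors the paper's overall structure: bound each single-color event $\P[X_c]$ via Harris's inequality applied to the independent indicators $\{I_{u,c}\}_u$, combine across colors by a negative-correlation inequality, and finish with Jensen applied to the concave function $d\mapsto \log\bigl(1-(1-p)^d\bigr)$.

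The one point of divergence is how the cross-color product bound is justified. The paper proves $\P\bigl[\bigcap_{c\in I}X_c\bigr]\leq \prod_{c\in I}\P[X_c]$ by a direct induction on $|I|$, resting on the conditioning inequality $\P[\forall c\in I,\,X_c]\leq \P[\forall c\in I,\,X_c\mid \overline{X_{c'}}]$ (intuitively, knowing that no edge at $v$ is monochromatic in $c'$ can only make it easier to block the colors in $I$). You instead invoke the standard negative-association property of the single-choice row $(I_{u,c})_{c\in L(u)}$, closure of NA under independent unions across vertices, and the disjoint-monotone-coordinates consequence. Your route is cleaner and more rigorous if one grants the NA toolkit, at the cost of importing machinery not listed among the paper's preliminaries; the paper's argument is self-contained but its key conditioning step is stated informally and would itself be most naturally justified by exactly the NA/coupling reasoning you use.
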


\begin{proof}
    For each $v \in V_j$ and $c \in L(v)$, define the following random variables:
    \[X_{v,c} \defeq \bbone\set{c\notin L_\phi(v)}, \quad X_v \defeq \prod_{c\in L(v)}X_{v, c}.\]
    It follows that $\P[L_\phi(v) = \0] = \Pr[X_v = 1]$.
    We will compute an upper bound for this probability through a series of claims.
    Let us first consider the event $\set{X_{v, c} = 1}$.
    
    \begin{claim}\label{claim:pr_color_lost}
        $\P[X_{v, c} = 1] \leq 1 - \left(1 - \prod_{i \in [k] - j}q_i^{-1}\right)^{\deg_H(v, c)}$.
    \end{claim}
    \begin{claimproof}
        We will lower bound $\P[X_{v, c} = 0]$ through \hyperref[theo:harris]{Harris's Inequality}.
        In order to do so, we define the following event for each $e \in E_H(v,c)$:
        \[X_e \defeq \bbone\set{\phi(e - v) = \set{c}}.\]
        Clearly, $\set{X_{v, c} = 0}$ is equivalent to $\set{\forall e \in E_H(v,c),\,X_e = 0}$.
        Let us define the set $\Gamma$ as follows:
        \[\Gamma \defeq \set{\set{\phi(u) = c}\,:\, u \in V(H), c \in L(u)},\]
        and let $S \subseteq \Gamma$ be the random events in $\Gamma$ that occur during our coloring procedure.
        Note that $S$ is formed by including each event $\set{\phi(u) = c}$ independently with probability $1/q_i$, where $u \in V_i$.
        Furthermore, $X_e = 1$ if and only if $\set{\phi(u) = c} \in S$ for each $u \in e-v$.
        Consider the following families for $e \in E_H(v,c)$:
        \[\mathcal{A}_e \defeq \set{S' \subseteq \Gamma\,:\, \text{when } S=S' \text{ we have } X_e = 0}.\]
        Let $S_1 \in \mathcal{A}_e$, and $S_2 \subseteq S_1$.
        Then, $S_2 \in \mathcal{A}_e$ as well.
        In particular, $\mathcal{A}_e$ is a decreasing family of subsets of $\Gamma$.
        Hence, by \hyperref[theo:harris]{Harris's Inequality}, we have
        \begin{align*}
            \P[X_{v, c} = 0] &\geq \prod_{e \in E_H(v,c)}\P[X_e = 0] \\
            &= \left(1 - \left(\prod_{i \in [k] - j}q_i\right)^{-1}\right)^{\deg_H(v,c)},
        \end{align*}
        as desired.
    \end{claimproof}

    We note that the above claim fails to hold in the DP-coloring setting and would be the main hurdle in proving Conjecture~\ref{conj}.
    In the next claim, we will show that the events $\set{X_{v, c} = 1}$ for $c \in L(v)$ are negatively correlated.

    \begin{claim}\label{claim:neg_corr}
        For every $I \subseteq L(v)$, we have 
        \[\P\left[\forall c \in I,\, X_{v, c} = 1\right] \leq \prod_{c \in I}\P\left[X_{v, c} = 1\right].\]
    \end{claim}
    \begin{claimproof}
        We will prove this by induction on $|I|$.
        The claim is trivial for $|I| \leq 1$.
        Suppose it holds for all $I$ such that $|I| = \ell$.
        Consider such a set $I$ and a color $c' \in L(v)\setminus I$.
        We note that 
        \[\P\left[\forall c \in I,\, X_{v, c} = 1\right] \leq \P\left[\forall c \in I,\, X_{v, c} = 1\mid X_{v, c'} = 0\right]\]
        as the probability to forbid all colors in $I$ is larger if no edge $e \in E_H(v)$ satisfies $\phi(e-v) = \set{c'}$.
        This is equivalent to:
        \begin{align*}
            &~\P\left[\forall c \in I,\, X_{v, c} = 1\right] \geq \P\left[\forall c \in I,\, X_{v, c} = 1\mid X_{v, c'} = 1\right] \\
            \iff&~\P\left[\forall c \in I\cup \set{c'},\, X_{v, c} = 1\right] \leq \P\left[\forall c \in I,\, X_{v, c} = 1\right] \P\left[X_{v, c'} = 1\right].
        \end{align*}
        By the induction hypothesis, the last expression is at most $\prod_{c \in I \cup \set{c'}}\P\left[X_{v, c} = 1\right]$.
    \end{claimproof}
    By Claims~\ref{claim:pr_color_lost} and \ref{claim:neg_corr}, we have
    \begin{align*}
        \P[X_v = 1] &\leq \prod_{c \in L(v)}\P[X_{v, c} = 1] \\
        &\leq \prod_{c \in L(v)}\left(1 - \left(1 - \prod_{i \in [k] - j}q_i^{-1}\right)^{\deg_H(v, c)}\right).
    \end{align*}
    Note that the function $\log (1 - c^x)$ is concave and increasing for $0 < c < 1$.
    Therefore, we have the following by Jensen's inequality
    \[\P[X_v = 1] \leq \left(1 - \left(1 - \prod_{i \in [k] - j}q_i^{-1}\right)^{\sum_{c \in L(v)}\deg_H(v, c)/q_j}\right)^{q_j},\]
    completing the proof.
\end{proof}

Note that
\[\sum_{c\in L(v)}\deg_H(v,c) \,=\sum_{e \in E_H(v)}\left|\bigcap_{u \in e}L(u)\right| \quad\leq\quad \deg_H(v)\,\min_{i \in [k]}q_i \quad\leq\quad \Delta_j\,\min_{i \in [k]}q_i.\]
Plugging this into the result of Lemma~\ref{lemma:prob}, we get
\[\P[L_\phi(v) = \0] \leq \left(1 - \left(1 - \prod_{i \in [k]}q_i^{-1}\right)^{\Delta_j\,\min_{i \in [k]}q_i/q_j}\right)^{q_j}.\]
For each $v \in V_j$, let $A_v$ denote the event that $L_\phi(v) = \0$.
The goal is to show that none of these events occur with positive probability.
Note that $A_v$ is mutually independent of all but at most $\Delta_j\left(\sum_{i \in [k]-j}\Delta_i - 1\right)$ events $A_u$.
We will apply the \hyperref[theo:LLL]{\LLL} with 
\[p \defeq \left(1 - \left(1 - \prod_{i \in [k] - j}q_i^{-1}\right)^{\Delta_j\,\min_{i \in [k]}q_i/q_j}\right)^{q_j}, \quad d_{LLL} \defeq \Delta_j\left(\sum_{i \in [k] - j}\Delta_i - 1\right)\]
to get:
\begin{align*}
    ep(d_{LLL} + 1) &= e\left(\Delta_j\left(\sum_{i \in [k]-j}\Delta_i - 1\right) + 1\right)\left(1 - \left(1 - \prod_{i \in [k] - j}q_i^{-1}\right)^{\Delta_j\,\min_{i \in [k]}q_i/q_j}\right)^{q_j}
\end{align*}
The above is at most $1$ as a result of \ref{cond:3}, completing the proof.

\subsection{Proof assuming \ref{cond:2}}

By Lemma~\ref{lemma:prob} and since $\deg_H(v, c) \leq D_j$, we have
\[\P[L_\phi(v) = \0] \leq \left(1 - \left(1 - \prod_{i \in [k]}q_i^{-1}\right)^{D_j}\right)^{q_j}.\]
For each $v \in V_j$, let $A_v$ denote the event that $L_\phi(v) = \0$.
The goal is to show that none of these events occur with positive probability.
Note that $A_v$ is mutually independent of all but at most $q_jD_j\left(\sum_{i \in [k] - j}q_iD_i - 1\right)$ events $A_u$.
We will apply the \hyperref[theo:LLL]{\LLL} with 
\[p \defeq \left(1 - \left(1 - \prod_{i \in [k] - j}q_i^{-1}\right)^{D_j}\right)^{q_j}, \quad d_{LLL} \defeq q_jD_j\left(\sum_{i \in [k] - j}q_iD_i - 1\right)\]
to get:
\begin{align*}
    ep(d_{LLL} + 1) &= e\,\left(q_jD_j\left(\sum_{i \in [k] - j}q_iD_i - 1\right) + 1\right)\left(1 - \left(1 - \prod_{i \in [k] - j}q_i^{-1}\right)^{D_j}\right)^{q_j}.
\end{align*}
The above is at most $1$ as a result of \ref{cond:2}, completing the proof.

\subsection*{Acknowledgments}

We thank the anonymous referees for their helpful comments.

\vspace{0.1in}
\printbibliography

\end{document}